\theoremstyle{plain}
\newtheorem{thm}{Theorem}[section]
\newtheorem{lem}[thm]{Lemma}
\newtheorem{cor}[thm]{Corollary}
\newtheorem*{claim*}{Claim}
\newtheorem*{con*}{Conjecture}
\newtheorem{lemma}[thm]{Lemma}
\theoremstyle{remark}
\newtheorem{rem}[thm]{Remark}
\newtheorem{exm}[thm]{Example}
\newtheorem{dfn}[thm]{Definition}
\newtheorem*{acknowledgements}{Acknowledgements}
\numberwithin{equation}{section}
\numberwithin{table}{section}
\newcommand{\wh}{\widehat}
\newcommand{\N}{\mathbb{N}}
\newcommand{\Z}{\mathbb{Z}}
\newcommand{\Q}{\mathbb{Q}}
\newcommand{\F}{\mathbb{F}}
\newcommand{\mfp}{\mathfrak{p}}
\newcommand{\ol}{\overline}
\newcommand{\olbfg}{\overline{{\bf g}}}
\newcommand{\Gri}{\ensuremath{\mathcal{O}}}
\renewcommand{\epsilon}{\varepsilon}
\renewcommand{\phi}{\varphi}
\renewcommand{\theta}{\vartheta}
\newcommand{\balpha}{{\alpha}}
\newcommand{\mcO}{\mathcal{O}}
\newcommand{\Qp}{\mathbb{Q}_p}
\newcommand{\rarr}{\rightarrow}
\newcommand{\epsi}{\varepsilon}
\newcommand{\val}{\mathrm{val}}
\newcommand{\zideal}{\zeta^{\triangleleft}}
\newcommand{\ceilalpha}{\lceil \balpha \rceil}
\DeclareMathOperator{\len}{len}
\DeclareMathOperator{\Des}{Des}
\DeclareMathOperator{\Gr}{Gr}
\DeclareMathOperator{\SL}{SL}
\DeclareMathOperator{\GL}{GL}
\DeclareMathOperator{\Mat}{Mat}
\def \mfm {\mathfrak{m}}
\def \bfo {{\bf 1}}
\def \bfG {{\bf G}}
\def \bfe {{\bf e}}
\def \bff {{\bf f}}
\def \bfh {{\bf h}}
\def \bfr {{\bf r}}
\def \bfY {{\bf Y}}
\def \Fp {\ensuremath{\mathbb{F}_p}}
\def \mcR {\ensuremath{\mathcal{R}}}
\def \p {\ensuremath{\mathfrak{p}}}
\def \Zp  {\mathbb{Z}_p}
\author{Michael M.~Schein} \address{Department of Mathematics,
  Bar-Ilan University, Ramat Gan 52900,
  Israel}\email{mschein@math.biu.ac.il}
\author{Christopher Voll} \address{Fakult\"at f\"ur Mathematik,
  Universit\"at Bielefeld, D-33501 Bielefeld, Germany}
\email{C.Voll.98@cantab.net}
\keywords{Normal zeta functions, Coxeter group statistics, generating
  functions, functional equations, nilpotent groups.}
\subjclass[2000]{11M41, 05A15, 20F55}
\thanks{Schein was supported by grant 2264/2010 from the
  Germany-Israel Foundation for Scientific Research and Development
  and a grant from the Pollack Family Foundation.  We acknowledge
  support by the DFG Sonderforschungsbereich 701 ``Spectral Structures
  and Topological Methods in Mathematics'' at Bielefeld University.}
\begin{document}
 \title[Normal zeta functions of Heisenberg groups over number rings
   II]{Normal zeta functions of the Heisenberg groups over number
   rings II - the non-split case} \date{\today}

 \begin{abstract} 
 We compute explicitly the normal zeta functions of the Heisenberg
 groups $H(R)$, where $R$ is a compact discrete valuation ring of
 characteristic zero. These zeta functions occur as Euler factors of
 normal zeta functions of Heisenberg groups of the form $H(\mcO_K)$,
 where $\mcO_K$ is the ring of integers of an arbitrary number
 field~$K$, at the rational primes which are
 non-split in~$K$. We show that these local zeta functions satisfy
 functional equations upon inversion of the prime.
\end{abstract}
 \maketitle

\thispagestyle{empty}

\section{Introduction}
Let $G$ be a finitely generated abstract or profinite group. For
$m\in\N$, let $a^\triangleleft_m(G)$ denote the number of (open)
normal subgroups of $G$ of index $m$ in~$G$. The \emph{normal zeta
  function} of $G$ is the Dirichlet generating series
$$\zideal_G(s) = \sum_{m=1}^\infty a^\triangleleft_m(G)m^{-s},$$ where
$s$ is a complex variable. If $G$ is a finitely generated nilpotent
group, then its normal zeta function converges on a complex half-plane
and satisfies the Euler product
\begin{equation*}
\zideal_G(s) = \prod_{p \textrm{ prime}}\zideal_{G,p}(s).
\end{equation*}
Here, for a prime $p$, the Euler factor $\zideal_{G,p}(s) =
\sum_{k=0}^\infty a^\triangleleft_{p^k}(G)p^{-ks}$ enumerates the
normal subgroups of $G$ of $p$-power index in $G$. It may also be
viewed as the normal zeta function of the pro-$p$ completion
$\wh{G}^p$ of $G$. The Euler product reflects the facts that the
normal zeta function of $G$ coincides with the normal zeta function of
its profinite completion~$\wh{G}$ and that $\wh{G} \cong \prod_{p
  \textrm{ prime}}\wh{G}^p$. The zeta functions
$\zideal_{G,p}(s)$ are known to be rational functions in $p^{-s}$;
cf.~\cite[Theorem~1]{GSS/88}.

Given a ring $\mcR$, the \emph{Heisenberg group} $H(\mcR)$ over $\mcR$
is the group of upper unitriangular $3\times 3$ matrices over $\mcR$:\
$$H(\mcR) = \left\{\left( \begin{matrix}
  1&a&c\\0&1&b\\0&0&1\end{matrix}\right) \mid a,b,c\in \mcR
  \right\}.$$ If $\mcR$ is a finitely generated torsion-free
  $\Z$-module of rank $n$, say, then $H(\mcR)$ is a finitely
  generated torsion-free nilpotent group of nilpotency class $2$ and
  Hirsch length $3n$. Given a prime $p$, the pro-$p$ completion of
  $H(\mcR)$ is isomorphic to the $3n$-dimensional nilpotent
  $p$-adic analytic pro-$p$ group $H(\mcR_p)$, where
  $\mcR_p = \mcR \otimes_\Z \Zp$, and we have
$$\zideal_{H(\mcR),p} = \zideal_{H(\mcR_p)}.$$

In this article we compute an explicit formula for the normal zeta
function of the Heisenberg group over an arbitrary compact discrete
valuation ring $R$ of characteristic zero, i.e.\ a finite extension of
the ring $\Zp$ of $p$-adic integers. Let $\mfm$ be the maximal ideal
of the local ring~$R$. The residue field $k_R =R/\mfm$ is a finite
extension of the prime field~$\Fp$.  Its degree $f=[k_R :\Fp]$ is
called the \emph{inertia degree} of $R$. The (\emph{absolute})
\emph{ramification index} $e$ of $R$ is given by $pR=\mfm^e$. The ring
$R$ is called \emph{unramified} (over $\Zp$) if $e=1$ and
\emph{totally ramified} (over $\Zp$) if~$f=1$. The \emph{degree} of
$R$ as an extension of $\Zp$ is $n=ef$. It coincides with the rank of
$R$ as a $\Zp$-module.

Normal zeta functions of Heisenberg groups of the form $H(R)$ occur as
Euler factors of normal zeta functions of Heisenberg groups over
number rings. Indeed, let $\mcO_K$ be the ring of integers of a number
field~$K$. Then $(\mcO_K)_p=\mcO_K\otimes_\Z\Zp$ is a local ring
precisely if $p$ does not split in $K$, i.e.\ it decomposes in $K$ as
$p\mcO_K = \mfp^e$, where $\mfp$ is a prime ideal of~$\mcO_K$.  
In this case, $f = [\mathcal{O}_K / \p : \F_p]$ and $n = ef = [K : \Q]$ is the degree of $K$.
We call such primes \emph{non-split} (in~$K$).  Note that all finite extensions of $\Zp$ arise in
this way.

It follows from the general result~\cite[Theorem~1]{GSS/88} that
normal zeta functions of groups of the form $H(R)$ are rational in
$p^{-s}$. The more specific result \cite[Theorem~3]{GSS/88} asserts
that the Euler factors of (normal) zeta functions of $H(\mcO_K)$ are
rational in the two parameters $p^{-s}$ and $p$ on sets of rational
primes with fixed decomposition type in $K$; cf.\ also \cite{SV1/13}
for details. There are, in particular, rational functions
$W^\triangleleft_{e,f}(X,Y)\in\Q(X,Y)$ such that for all rational
primes~$p$ and rings $R$ as above, the following holds:
$$\zideal_{H(R)}(s) = W^\triangleleft_{e,f}(p,p^{-s}).$$
 
In Theorem~\ref{thm:main}, our main result, we compute the rational
functions $W^\triangleleft_{e,f}(X,Y)$ explicitly. Moreover, we
prove the following functional equation in Corollary~\ref{cor:funeq}.

\begin{thm}\label{thm:main.funeq}
Let $e,f\in\N$ with $ef=n$. Then
\begin{equation} \nonumber \label{equ:funeq.intro}
W^\triangleleft_{e,f}(X^{-1},Y^{-1}) = (-1)^{3n}
X^{\binom{3n}{2}}Y^{5n+2(e-1)f}W^\triangleleft_{e,f}(X,Y).
\end{equation}
\end{thm}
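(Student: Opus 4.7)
The plan is to derive the functional equation directly from the explicit formula for $W^\triangleleft_{e,f}(X,Y)$ furnished by Theorem~\ref{thm:main}. This is a combinatorial verification in the spirit of the companion paper on the split case; here the additional term $2(e-1)f$ in the $Y$-exponent must be traced to the ramification of~$R$.

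First, I would write $W^\triangleleft_{e,f}(X,Y)$ in its explicit form as a finite sum of rational functions indexed by subsets of $\{1,\ldots,3n-1\}$ (or, equivalently, by elements of the relevant symmetric group $S_{3n}$ stratified by descent sets), each term being a product of Gaussian binomials $\binom{m}{k}_X$, monomials $X^a Y^b$, and geometric-series denominators $(1-X^a Y^b)^{-1}$. Under $(X,Y) \mapsto (X^{-1}, Y^{-1})$, each piece transforms predictably: Gaussian binomials obey the palindromic identity $\binom{m}{k}_{X^{-1}} = X^{-k(m-k)} \binom{m}{k}_X$, while each denominator contributes a sign and a monomial via $(1 - X^{-a} Y^{-b})^{-1} = -X^a Y^b (1 - X^a Y^b)^{-1}$. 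Combining these term-wise inversions with an involution on the indexing set — the natural candidate being set complementation, i.e.~conjugation by the longest element $w_0$ of the ambient symmetric group, which reverses both length and descent statistics — should pair up the summands and expose a global monomial-times-sign prefactor.

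The main obstacle, I expect, will be matching the ramification-dependent exponent of $Y$ exactly. The contribution $5n = 3n + 2n$ splits naturally between the abelianisation $R^2$ and the commutator subgroup $R$ of $H(R)$, reflecting the Hirsch length $3n$, but the correction $2(e-1)f = 2(n-f)$ must emerge precisely from those denominators whose $Y$-exponents scale with the ramification index $e$ rather than with the total degree $n=ef$. Careful bookkeeping of how the refined Coxeter-type statistics in the formula of Theorem~\ref{thm:main} encode the $\mathfrak{m}$-adic filtration of $R$ — and in particular of which denominators acquire the factor $(e-1)f$ in their $Y$-exponents — will be needed to show that the total $Y$-degree shift is indeed $5n + 2(e-1)f$. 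The sign $(-1)^{3n}$ and the $X$-exponent $\binom{3n}{2}$ should then follow more routinely: the former is the parity of the number of denominator factors surviving the complementation involution, and the latter matches the standard length of $w_0$ in $S_{3n}$, as is familiar for normal zeta functions of Hirsch-length-$3n$ nilpotent groups of class two.
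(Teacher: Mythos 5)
Your overall strategy---inverting each building block of the explicit formula from Theorem~\ref{thm:main} and reindexing the sum via left multiplication by the longest element---is the same as the paper's, but your plan misplaces the structure of that formula in ways that matter, and it leaves the one genuinely nontrivial step unresolved. First, the sum in \eqref{equ:main} runs over $w\in S_n$ with $n=ef$ (equivalently over subsets of $[n-1]$), not over $S_{3n}$; the exponent $\binom{3n}{2}$ is assembled from $X^{\binom{2n}{2}}$ (inversion of $\zeta_{\Z_p^{2n}}$), the $X$-degrees of $x_0$ and of $\prod_{j=1}^{n-1}x_j$, and $X^{\binom{n}{2}}=X^{\len(w_0)}$ with $w_0\in S_n$---it is not the length of the longest element of $S_{3n}$. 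Second, the correction $2(e-1)f$ does not come from the denominators, which are $\prod_{i=0}^{n-1}(1-x_i)$ and depend only on $n$, not on $e$ and $f$ separately; it comes entirely from the numerator statistic $p^{2f\left\lfloor\len^{[n-2]}(w)/f\right\rfloor s}$.

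The genuine gap sits exactly where you defer to ``careful bookkeeping'': you need that statistic to transform cleanly under $w\mapsto w_0w$, namely
\begin{equation*}
\left\lfloor \frac{\len^{[n-2]}(w_0w)}{f}\right\rfloor \;=\; (e-1) - \left\lfloor \frac{\len^{[n-2]}(w)}{f}\right\rfloor ,
\end{equation*}
and this is not automatic. By \eqref{equ:len.par} one has $\len^{[n-2]}(w_0w)=\len^{[n-2]}(w_0)-\len^{[n-2]}(w)$, but the floor of a difference is not in general the difference of the floors. The key observation, absent from your plan, is that $\len^{[n-2]}(w_0)=n-1$, so $\frac{n-1}{f}=e-\frac{1}{f}$ has the largest possible fractional part, which forces the displayed identity for every $w$ and produces precisely the shift $2(e-1)f$ in the $Y$-exponent. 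Without it the involution does not pair the summands into a functional equation, so the argument would stall exactly where you predict difficulty. Once this is in place, the remaining steps you list are indeed routine: $\Des(w_0w)=[n-1]\setminus\Des(w)$ by \eqref{equ:des.w0}, $\len(w_0w)=\binom{n}{2}-\len(w)$, and the sign $(-1)^{3n}=(-1)^{2n}(-1)^{n}$ coming from $\zeta_{\Z_p^{2n}}$ together with the $n$ factors $(1-x_i)^{-1}$.
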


Note that $3n = \dim(H(R))$ and $5n = \dim(H(R)) + \dim(H(R)/H(R)')$,
where $H(R)'$ is the derived subgroup of $H(R)$.  Here ``$\dim$''
refers to the dimensions as $p$-adic analytic pro-$p$ groups. The term
$2(e-1)f$ in the exponent of $Y$ describes the deviation from the ``generic'' symmetry factor
in the functional equations for the local factors of normal zeta
functions of finitely generated nilpotent groups of nilpotency
class~$2$; cf.~\cite[Theorem~C]{Voll/10}.

Prior to our work, the normal zeta functions $\zideal_{H(R)}$ had been
calculated for all cases occurring for~$n\leq 3$; see \cite[Theorems
  2.3, 2.7, and 2.9]{duSWoodward/08}.

\subsection{Methodology}
The results of the current paper complement those of~\cite{SV1/13},
where we carry out analogous computations of the normal zeta functions
of the groups $H((\mcO_K)_p)$ for primes $p$ which are unramified
in the number field~$K$. In \cite[Theorem~1.2]{SV1/13} we establish functional
equations for these zeta functions that are comparable to those in
Theorem~\ref{thm:main.funeq}. Our results agree, of course, in the
common special case of primes $p$ which are inert in $K$; see
Theorem~\ref{thm:inert}. In \cite[Conjecture~1.4]{SV1/13} we
conjecture a functional equation for $\zideal_{H(\mcO_K),p}(s)$ for
arbitrary (not necessarily unramified or non-split) primes.

The methods used in the present paper are, however, quite different
from those of~\cite{SV1/13}. There the problem of computing the
relevant zeta functions reduces to that of effectively enumerating
subgroups of finite abelian $p$-groups varying in infinite,
combinatorially described families. The precise shape these families
may take is determined by the decomposition type of the rational prime
$p$ in the number field~$K$. The sum defining the local zeta function
is organized as a finite sum, indexed by certain Dyck words.

The decomposition type that leads to the combinatorially simplest
situation is that of inert primes, namely the case where $p
\mathcal{O}_K$ is a prime ideal.  We view the non-split case considered in this paper as
a degeneration of the inert case and tackle it using geometric and
Coxeter-group-theoretic ideas introduced in \cite{Voll/05}
and~\cite{KlopschVoll/09}, as we now explain.

The paper \cite{Voll/05} argues that the normal subgroup growth of a
finitely generated nilpotent group $G$ of nilpotency class $2$ is,
to a large extent, determined by the geometry of its \emph{Pfaffian
  hypersurface}. This is a projective hypersurface, defined explicitly
by the Pfaffian of an antisymmetric matrix of linear forms encoding
the group's structure constants with respect to a chosen (Mal'cev)
basis. If the Pfaffian hypersurface of $G$ is smooth and
contains no lines, and $G$ satisfies some other mild hypotheses, then
\cite[Theorem~3]{Voll/05} gives an explicit formula for the Euler
factors $\zideal_{G,p}$, at almost all primes~$p$, in terms of
the numbers of $\Fp$-rational points on the Pfaffian
hypersurface. This formula presents the Euler factor as the sum of an
\emph{approximative term}, which coincides with the Euler factor if
and only if the Pfaffian hypersurface has no $\Fp$-rational point, and
a \emph{correction term}, which corrects the approximation along the
hypersurface's $\Fp$-points. In the special case $G=H(\mcO_K)$, the
results of \cite{Voll/05} are not directly applicable.  The Pfaffian
hypersurface is the union of $n$ hyperplanes in general position in 
$(n-1)$-dimensional projective space. It has no $\Q$-rational points;
over $\Qp$ it splits as a union of (restrictions of scalars of)
 hyperplanes, in a way determined by the decomposition
behaviour of $p$ in~$K$. The computation of the relevant local zeta
function comes down to a detailed quantitative analysis of the
interplay between these fixed hyperplanes and varying $p$-adic
lattices.

In the case of inert primes $p$, the ideas of \cite{Voll/05} do apply directly to the Euler factors $\zideal_{H(\Gri_K),p}$, as in this case
the Pfaffian hypersurface has no $\Fp$-rational points.  Thus the Euler
factor is equal to the approximative term mentioned above. In the
setup of~\cite{Voll/05}, this means that the set of solutions of a
certain system of linear congruences has a particularly simple
form. For non-split primes, ramification complicates this system only
slightly.  The main idea of the current article is to control this
complication using parabolic length functions on symmetric
groups. These functions generalize the usual Coxeter length and were
used to solve related enumeration problems
in~\cite{KlopschVoll/09}. Theorem~\ref{thm:main} expresses
$\zideal_{H(R)}(s)$ in terms of parabolic length functions on the
symmetric group $S_n$, whereas Corollary~\ref{cor:tot.ram} gives a
formula in the totally ramified case in terms of parabolic length
functions on~$S_{n-1}$. The functional equation expressed in
Theorem~\ref{thm:main.funeq} reflects the good behaviour of the relevant parabolic
length functions under (left-)multiplication by the Coxeter group's longest element.

\subsection{Outlook}
In this section we briefly describe some directions for future research building on the methods of the present paper and of~\cite{SV1/13}.

It would be of great interest to match the geometric setup of
\cite{Voll/05} precisely with the combinatorial approach taken in
\cite{SV1/13}, for the Heisenberg groups $H(\mathcal{O}_K)$ and also more generally. It is plausible that the presence of lines and higher-dimensional linear spaces on the Pfaffian
hypersurface necessitates further \emph{correction terms}, accounting
for the possible intersection types of flags with coordinate
hyperplanes.  
We note that Dyck words and possible intersection behaviours of a flag with a fixed set of hyperplanes in general position are both enumerated by the Catalan numbers; moreover, there is a natural bijection between these two types of objects.  For the case of $[K:\Q] = 3$, the correction terms arising from a generalization of the approach of \cite{Voll/05} appear to coincide with the functions associated to Dyck words that were computed in \cite{SV1/13}.  This is likely to be a special case of a very general phenomenon.


Let $g\in\N$. Given $g$-tuples $\mathbf{e} = (e_1,
\dots, e_g)\in\N^g$ and $\mathbf{f} = (f_1, \dots, f_g)\in\N^g$
satisfying $\sum_{i = 1}^g e_i f_i = [K:\Q]$, we say that a (rational)
prime $p$ is of \emph{decomposition type} $(\bfe,\bff)$ in the number field $K$
if 
\begin{equation*}
p \mathcal{O}_K = \p_1^{e_1} \cdots \p_g^{e_g},
\end{equation*}
where the $\p_i$ are distinct prime ideals in $\mcO_K$ with
ramification indices $e_i$ and inertia degrees $f_i = [\mathcal{O}_K /
  \p_i : \F_p]$ for~$i=1,\dots,g$.  We call the decomposition type $(\bfe, \bff)$ {\emph{unramified}} if $\bfe = \bfo = (1, \dots, 1)$.

  Taken together, \cite{SV1/13} and the present paper give explicit
  formulae for all but finitely many Euler factors of the global ideal
  zeta functions $\zideal_{H(\mcO_K)}(s)$.  Still outstanding is an
  analysis of the general ramified decomposition types.  In view of
  the geometric picture sketched above, it is suggestive to view a
  general decomposition type $(\bfe, \bff)$ as a degeneration of an
  associated unramified decomposition type $(\bfo, \bff^\prime)$,
  where $\bff^\prime = (e_1 f_1, \dots, e_g f_g)$.  The methods of
  this paper suggest trying to describe the effect of this
  degeneration on the zeta function, computed in~\cite{SV1/13}, of the
  unramified type $(\bfo, \bff^\prime)$ by means of suitable parabolic
  length functions or similar combinatorially described functions. The
  current paper carries out this idea for $g=1$.

  We see this paper and \cite{SV1/13} as first steps in a
  systematic study of the behaviour of (normal) subgroup growth of
  general nilpotent groups under base extension. Specifically, one may
  ask the following: given a
  finitely generated nilpotent group of the form $G = \bfG(\Z)$, arising as the group of
  $\Z$-rational points of a unipotent group scheme $\bfG$ defined over
  $\Z$, how does the normal subgroup growth sequence
  $(a^\triangleleft_m(\bfG(\Gri)))_{m\in\N}$ vary as $\Gri$ ranges
  over the rings of integers of number fields?  For instance, it seems
  reasonable to expect that the local factors of the associated normal
  zeta functions should admit some kind of uniform description on sets of
  (rational) primes of fixed decomposition type.

  The same expectation holds for zeta
  functions encoding other data, such as the subgroup growth sequence
  $(a_m(\bfG(\Gri)))_{m\in\N}$ counting \emph{all} finite index
  subgroups of $\bfG(\Gri)$. The associated Dirichlet series
  $\zeta_{\bfG(\Gri)}(s)$ are known to have Euler decompositions
  analogous to those of $\zeta^\vartriangleleft_{\bfG(\Gri)}(s)$.  It
  is very natural to
  try to extend the methodology developed in this paper and
  in~\cite{SV1/13} to the subgroup zeta factors
  $\zeta_{\bfG(\Gri),p}(s)$.  For the Heisenberg group, it is conjectured in \cite[p.~188]{GSS/88} that for
  every decomposition type $(\bfe, \bff)$ there exists a rational
  function $W_{\bfe,\bff}(X,Y)\in\Q(X,Y)$ such that for all rational
  primes $p$ of decomposition type $(\bfe, \bff)$ in $K$ the following
  holds:
$$\zeta_{H(\Gri_K),p}(s) = W_{\bfe,\bff}(p,p^{-s}).$$ 
While the analogous statement for normal zeta functions was already proved in~\cite{GSS/88}, to our
knowledge this conjecture has not even been completely settled for
$[K:\Q]=2$ (but see \cite[Theorem~2.4]{duSWoodward/08} for the case of
split primes). That counting all finite index subgroups is a
far more complex task than counting normal such subgroups is
reflected in the fact that systems of \emph{quadratic} Diophantine
equations take the role played by the systems of linear such
equations that we work with in this paper and in~\cite{SV1/13}.


\begin{acknowledgements}
We are grateful to Mark Berman for bringing us together to work on
this project and to the referee for helpful comments.
\end{acknowledgements}

\section{Preliminaries}
Let $p$ be a rational prime.  For an integer $m \geq 1$, we write $[m]$ for $\{ 1, 2,
\dots, m \}$ and $[m]_0$ for $\{ 0, 1, \dots, m \}$. Given integers
$a,b$ with $a\leq b$, we write $[a,b]$ for $\{a,a+1,\dots,b\}$. Given
a finite set $I$ of integers, we write $I=\{i_1,\dots,i_\ell\}_<$ to
indicate that $i_1<\dots<i_\ell$.

\subsection{Coxeter groups}\label{subsec:cox}
The symmetric group $S_n$ of degree $n$ is a Coxeter group with
Coxeter generating set $\mathcal{S}=\{s_1,\dots,s_{n-1}\}$, where, for
each $i\in[n-1]$, we denote by $s_i = (i \mbox{ } i + 1)$ the
transposition of the letters $i$ and $i+1$ in the standard permutation
representation of~$S_n$. We will frequently identify elements of $S_n$
with permutations of $[n]$ in this way.

We write $\len:S_n \rarr [\binom{n}{2}]_0$ for the usual Coxeter
length function: for $w\in S_n$, $\len(w)$ denotes the length of a
shortest word representing $w$ as a product of elements
of~$\mathcal{S}$.

Given $I \subseteq [n - 1]$, we write $W_I = \langle s_i \mid i \in I
\rangle$ for the parabolic subgroup of $S_n$ generated by the elements
of $\mathcal{S}$ indexed by elements of $I$. The restriction of $\len
$ to $W_I$ coincides with the standard length function on the Coxeter
group $W_I$.  Every element $w\in S_n$ can be factorized uniquely as
$w=w^Iw_I$, where $w_I\in W_I$ and $w^I$ is the unique element of
shortest length in the coset $wW_I$.  Moreover, $\len(w) = \len(w_I) +
\len(w^I)$; cf.~\cite[Section~1.10]{Humphreys/90}. We set $\len^{I}(w)
:= \len(w^I)$, and call $\len^I$ the (\emph{right}) \emph{parabolic
  length function} associated to $I$;
cf.~\cite[Definition~2.2]{KlopschVoll/09}.

The group $S_n$ has a unique longest element $w_0$ with respect to
$\len$, namely the inversion $w_0(i)=n+1-i$ for $i\in[n]$. Parabolic
length functions are well-behaved with respect to (left)
multiplication with $w_0$: for every $I\subseteq [n-1]$ and
$w\in S_n$,
\begin{equation}\label{equ:len.par}
\len^I(w_0w) = \len^I(w_0) - \len^{I}(w);
\end{equation}
cf.~\cite[Lemma~2.3]{KlopschVoll/09}. Clearly $\len =
\len^{\varnothing}$. The other parabolic length function relevant for
us is~$\len^{[n-2]}$. It is easy to check that $\len^{[n-2]}(w)=
n-w(n)$ for all $w \in S_n$, and in particular that $\len^{[n-2]}(w_0)=n-1$.

 The (\emph{right}) \emph{descent set} $\Des(w)$ of an element $w\in
 S_n$ is defined as
 $$\Des(w) = \{i\in[n-1] \mid \len(ws_i) < \len(w) \}.$$
 It is easily seen that $\Des(w) = \{i\in [n-1] \mid w(i+1) < w(i)\}$ and
\begin{equation}\label{equ:des.w0}
\Des(w_0w) = [n-1] \setminus \Des(w).
\end{equation}
\begin{exm} Consider the element
  $w\in S_6$ corresponding to the permutation matrix
   $$w =
  \left( \begin{matrix}0&0&0&1&0&0\\1&0&0&0&0&0\\0&0&0&0&1&0\\0&0&0&0&0&1\\0&1&0&0&0&0\\0&0&1&0&0&0 \end{matrix}\right).$$
  Here $\Des(w) = \{3\}$, $\len(w) = 7$ and $\len^{[4]}(w)= 6-w(6)=2$.
\end{exm}

For a variable $Y$ and integers
$a,b\in\N_0$ with $a\geq b$, the
\emph{Gaussian binomial coefficient} is defined to be
$$\binom{a}{b}_Y = \frac{\prod_{i=a-b+1}^a (1-Y^i)}{\prod_{i=1}^b
  (1-Y^i)}\in \Z[Y].$$
Given an integer $n\in\N$ and a subset $I  = \{ i_1, \dots, i_\ell \}_< \subseteq [n-1]$, the associated \emph{Gaussian
  multinomial} is defined as
$$ \binom{n}{I}_Y = \binom{n}{i_{\ell}}_Y \binom{i_{\ell}}{i_{\ell-1}}_Y \cdots
\binom{i_2}{i_1}_Y \in \Z [Y].$$
 Then (cf.~\cite[Section~1.7]{Stanley/12}) for $I\subseteq[n-1]$ we have
\begin{equation}\label{equ:des}
\sum_{w\in S_n,\; \Des(w) \subseteq I} Y^{\len(w)} = \binom{n}{I}_Y.
\end{equation}

\subsection{Grassmannians} \label{subsec:grass}
Given an integer $i\in[n]_0$, we denote by $\Gr(n,n-i)$ the
Grassmannian of $(n-i)$-dimensional subspaces of affine
$n$-dimensional space. This $i(n-i)$-dimensional projective variety
has a decomposition
$$\Gr(n,n-i) = \bigcup_{w\in S_n,\; \Des(w) \subseteq\{i\}} \Omega_w$$
into (\emph{Schubert}) \emph{cells} $\Omega_w$, indexed by
$\binom{n}{n-i}$ elements of~$S_n$. These cells have an elementary
realization as follows. Fix a vector space basis for affine
$n$-dimensional space. Subspaces of dimension $n-i$ may then be represented
by $\GL_{n-i}$-left cosets of matrices of size $n\times(n-i)$ of full
rank $n-i$. A set of such matrices of the form {
  \setlength{\arraycolsep}{8pt}
\begin{equation*} 
\left( \begin{matrix} * & * & \dots&& * \\ \vdots &&&& \vdots\\ * & *
  & \dots && *
  \\ 1&0&\dots&\dots&0\\ 0&*&\dots&\dots&*\\ \vdots&\vdots&&&\\ 0&1&0&\dots&0\\ 0&0&*&\dots&*\\ &\vdots&\vdots&&\vdots\\ 0&
  0&0&\dots&1\\ 0&&&&0\\ \vdots&&&&\vdots\\ 0&0&\dots&&0
 \end{matrix}\right)_{n \times (n-i)}
\end{equation*}}

\noindent 
where $*$ stands for arbitrary field elements, is a set of unique
coset representatives. More precisely, for any such matrix there is a
subset $J \subseteq [n]$ of cardinality $n - i$ such that the submatrix comprising rows labeled by
elements of $J$ is the $(n-i)$-identity matrix.  The matrix above has zeroes
in all entries below or to the right of a $1$ in this submatrix, and
arbitrary entries in the remaining positions.  The set of cosets
corresponding to such matrices for a fixed subset
$J=\{j_1,\dots,j_{n-i}\}_<\subseteq[n]$ may be identified with the
cell $\Omega_w$, where $w\in S_n$ is the unique element in $S_n$ whose
descent set is contained in $\{i\}$ and which satisfies $w(i + m) =
j_m$ for all $m \in [n-i]$. This illustrates that each cell $\Omega_w$
is an affine space of dimension $i(n-i) - \len(w)$, which is the number of symbols $*$
in the above matrix. Hence, given a prime $p$, the number
$\#\Gr(n,n-i;\Fp)$ of $\Fp$-rational points of $\Gr(n,n-i)$ is given
by the formula
\begin{equation}\label{equ:grass}
\#\Gr(n,n-i;\Fp)=\sum_{w\in S_n,\; \Des(w)
  \subseteq\{i\}}p^{i(n-i)-\len(w)} =
\binom{n}{n-i}_{p^{-1}}p^{i(n-i)} =\binom{n}{n-i}_{p};
\end{equation}
cf.~\eqref{equ:des}.  We refer to \cite[Section 3.2]{Manivel/01} for
further information about Schubert cells.

\subsection{Lattices}\label{subsec:lattices}
For the reader's convenience, we recall some notation used in \cite{Voll/05} to parameterize
sublattices $\Lambda\leq \Zp^n$. A sublattice $\Lambda\leq \Zp^n$ of finite index in
$\Zp^n$ is \emph{maximal in $\Zp^n$} if $p^{-1}\Lambda\not\leq
\Zp^n$. Such a lattice is called \emph{of type
  $\nu(\Lambda)=(I,\bfr_I)$}, where $I=\{i_1,\dots,i_\ell\}_< \subseteq
   [n-1]$ and $\bfr_I=(r_{i_1},\dots,r_{i_\ell})\in\N^\ell$, if $\Lambda$
   has elementary divisors
$$p^\nu :=
   (\underbrace{1,\dots,1}_{i_1},\underbrace{p^{r_{i_1}},\dots,p^{r_{i_1}}}_{i_2-i_1},\dots,\underbrace{p^{\sum_{\iota\in
         I}r_\iota},\dots,p^{\sum_{\iota\in I}r_\iota}}_{n-i_\ell})$$
   with respect to~$\Zp^n$. (Note that this ordering differs from the
   one used in \cite[Section~3.1]{Voll/10}.)
 
 Fix a $\Zp$-basis $(\epsilon_1,\dots,\epsilon_n)$
   of $\Zp^n$.
   The group $\Gamma = \SL_n(\Zp)$ acts transitively on the finite set
   of maximal sublattices of $\Zp^n$ of given type~$\nu =
   (I,\bfr_I)$.  Denote by $\Gamma_{(I,\bfr_I)}$ the stabilizer in
   $\Gamma$ of the diagonal lattice $\bigoplus_{j=1}^n (p^\nu)_j\Zp
   \epsilon_j$. This allows us to identify a given maximal lattice
   with a coset $\boldsymbol{\alpha} \Gamma_{(I,\bfr_I)}$, where
   $\boldsymbol{\alpha}\in\Gamma$. The number of maximal lattices of type
   $(I,\bfr_I)$ inside $\Zp^n$ is given by
\begin{equation}\label{equ:num.latt}
|\Gamma:\Gamma_{(I,\bfr_I)}| = \binom{n}{I}_{p^{-1}}p^{\sum_{\iota\in
    I}r_\iota \iota(n-\iota)};
\end{equation}
see, for instance, \cite[Eq.~(26)]{Voll/10}.

\subsection{Linearization} \label{sec:liering}
The problem of counting finite-index
normal subgroups of $H(R)$ turns out to be equivalent to the problem of counting
finite-index ideals in a certain Lie ring, which we now introduce.  Given a ring $\mcR$, the
\emph{Heisenberg Lie ring} $L(\mcR)$ over $\mcR$ is defined as
\begin{equation*} 
L(\mcR) = \left\{ \left( \begin{array}{ccc} 0 & a & c \\ 0 & 0 & b
  \\ 0 & 0 & 0 \end{array} \right) \mid a, b, c \in \mcR \right\},
\end{equation*} 
equipped with the Lie bracket induced from $\mathfrak{gl}_3(\mcR)$.
The derived subring $L(\mcR)^\prime$ of $L(\mcR)$ is equal to the
center of $L(\mcR)$ and consists of those matrices for which $a = b = 0$.  Let
$\overline{L(\mcR)} = L(\mcR) / L(\mcR)^\prime$ be the abelianization.

If $\mcR$ is an $A$-module of finite rank, for some commutative ring $A$, then so is
$L(\mcR)$. In this case, $L(\mcR)$ has only finitely many 
$A$-ideals of each finite index. The ($A$-)\emph{ideal zeta function} of
$L(\mcR)$ is then defined as the Dirichlet generating function
\begin{equation} \label{equ:def.ideal.zeta}
\zideal_{L(\mcR)}(s) = \sum_{n=1}^\infty
a_n^\triangleleft(L(\mcR))n^{-s},
\end{equation}
where $a_n^\triangleleft(L(\mcR))$ denotes the number of $A$-ideals of
index $n$ in $L(\mcR)$.  In the cases considered in this paper, we
have $A = \Z_p$.

\section{Computation of the functions $W^\triangleleft_{e,f}(X,Y)$}
\subsection{The set-up}
Let $R$ be a compact discrete valuation ring of characteristic zero,
with maximal ideal $\mathfrak{m}$ and finite residue field $k_R = R /
\mathfrak{m}$.  Fix a uniformizer $\pi \in \mathfrak{m}$, and let
$\val$ be the discrete valuation on $R$, normalized so that $\val
(\pi) = 1$.  Let $p$ be the characteristic of $k_R$ and $f = [k_R : \F_p]$
the inertia degree.  Denote by $e$ the ramification index of $R$,
which satisfies $pR = \mathfrak{m}^e$.  Note that there is a natural
ring embedding of $\Z_p$ into $R$, endowing $R$ with a $\Z_p$-module
structure.

Let $(\overline{\beta}_1, \dots, \overline{\beta}_f)$ be an ordered
$\F_p$-basis of $k_R$.  For each $i \in [f]$, we fix a lift $\beta_i \in
R$ of~$\overline{\beta}_i$.  Then $R$ is a free $\Z_p$-module of rank
$n = ef$ and the set
\begin{equation*} 
\mathcal{B} = \left\{ \beta_i \pi^j \mid i \in [f], j \in [e-1]_0 \right\}
\end{equation*}
is a $\Zp$-basis; cf.~\cite[Proposition II.6.8]{Neukirch/99}.  We
order it as follows: $\mathcal{B} = (d_1, \dots, d_n)$, where $d_{i +
  fj} = \beta_i \pi^j$.  Setting
$$
\begin{array}{lcr}
a_i = \left( \begin{array}{ccc} 0 & d_i & 0 \\ 0 & 0 & 0 \\ 0 & 0 &
  0 \end{array} \right), & a_{n + i} = \left( \begin{array}{ccc} 0 & 0
  & 0 \\ 0 & 0 & d_i \\ 0 & 0 & 0 \end{array} \right), & c_i =
\left( \begin{array}{ccc} 0 & 0 & d_i \\ 0 & 0 & 0 \\ 0 & 0 &
  0 \end{array} \right)
\end{array}
$$ for each $i \in [n]$, we obtain the following presentation of the
Heisenberg Lie ring $L(R)$ defined in Section \ref{sec:liering}:
\begin{equation} \label{equ:ring.pres}
  L(R) = \left \langle a_1, \dots, a_{2n}, c_1, \dots, c_n \mid [a_i,
    a_j] = M(\mathbf{c})_{ij},\; i,j \in [2n] \right \rangle.
\end{equation}

Here $M(\bfY) \in \Mat_{2n}(\Z_p [Y_1, \dots, Y_n])$ is a matrix whose
entries are $\Z_p$-linear forms in the variables $Y_1, \dots, Y_n$, and
$M(\mathbf{c})$ is the matrix obtained after making the substitution
$Y_i = c_i$ for all $i \in [n]$.  More precisely, $M(\mathbf{Y})$ has
the form
\begin{equation} \label{equ:mc}
M(\mathbf{Y}) = \left( \begin{array}{cc} 0 & B(\mathbf{Y}) \\ -
  B(\mathbf{Y}) & 0 \end{array} \right),
\end{equation}
where $B(\mathbf{Y})$ is an $n \times n$ matrix whose entries are
given by $B(\mathbf{Y})_{ij} = \sum_{k = 1}^n \gamma^{ij}_k Y_k$, and
the ``structure constants'' $\gamma^{ij}_k \in \Z_p$ are defined by
the relations $d_i d_j = \sum_{k=1}^n \gamma^{ij}_k d_k$.  Note that
\eqref{equ:ring.pres} differs from the presentation appearing in
\cite[Section 2.1]{SV1/13} by a reordering of the generators $a_i$.  

\begin{rem} \label{rem:structure.constants} Given integers $i, j
  \in [n]$, write $i = i_1 f + i_0$ and $j = j_1 f + j_0$, where $i_0,
  j_0 \in [f]$ and $i_1, j_1 \in [e-1]_0$.  Define $\eta = \pi^e / p
  \in \Z_p^\ast$.  It is immediate from the definition of the basis
  elements $d_i$ that $d_i d_j = \pi^{i_1 + j_1} d_{i_0} d_{j_0}$. We
  write $i_1 + j_1 = \ell_1 e + \ell_0$ for $\ell_0 \in [e-1]_0$ and
  $\ell_1 \in \{0,1\}$. The structure constants $\gamma^{ij}_k$ satisfy
  the following:
\begin{enumerate}
\item $\gamma^{ij}_k = \gamma^{ji}_k$ for all $i,j,k \in [n]$.
\item $\gamma^{ij}_{\ell_0 f + k} = p^{\ell_1} \eta^{\ell_1} \gamma_k^{i_0 j_0}$ for $k \in [(e- \ell_0)f]$.
\item $\gamma^{ij}_{k} \in p^{\ell_1 + 1} \Z_p$ for all $k \in [\ell_0 f]$.
\end{enumerate} 
\end{rem}

In particular, $B(\bfY)$ is symmetric and has the following block decomposition:
\begin{equation}\label{equ:B.block}
B(\bfY) = \left( \begin{matrix} B^{(0)}(\bfY) & B^{(1)}(\bfY) & \dots
  & B^{(e-1)}(\bfY)\\B^{(1)}(\bfY) &B^{(2)}(\bfY) & \hdots &
  pB^{(e)}(\bfY)\\ \vdots & \vdots & & \vdots\\ B^{(e-1)}(\bfY) &
  pB^{(e)}(\bfY)&\dots&pB^{(2e-2)}(\bfY)\end{matrix}\right),
\end{equation}
for suitable square matrices
$B^{(\mu)}(\bfY)\in\Mat_f(\Zp[\bfY])$ of $\Zp$-linear forms, for
$\mu \in[2e-2]_0$.

By the remark after \cite[Lemma 4.9]{GSS/88}, we have that
\begin{equation} \label{equ:lie.ring.equiv}
\zeta^{\triangleleft}_{H(R)} = \zeta^{\triangleleft}_{L(R)},
\end{equation}
where the ideal zeta function on the right hand side was defined in
\eqref{equ:def.ideal.zeta}; see the discussion in \cite[Section
  1.3]{SV1/13} for more details.

It is well known that, for
all $d\in\N$, the (normal) zeta function of the free abelian pro-$p$
group $\Zp^d$ of rank $d$ is given by
\begin{equation}\label{equ:free.ab}
\zeta_{\Zp^d}(s) = \prod_{i=0}^{d-1}\zeta_p(s-i),
\end{equation}
where $\zeta_p(s)=(1-p^{-s})^{-1}$ is the Euler factor of the Riemann
zeta function $\zeta(s)$ at the prime~$p$; see, for instance,
\cite[Proposition 1.1]{GSS/88}.

\subsection{The unramified case}
First suppose that $e=1$, covering the case of finite unramified
extensions $R$ of $\Zp$.

\begin{thm} \label{thm:inert}
Let $p$ be a prime and $R$ a finite unramified extension of
$\Zp$. Then
\begin{equation} \label{equ:zeta.inert}
\zideal_{H(R)}(s) = \zeta_{\Z_p^{2n}}(s) \frac{1}{1-x_0}\sum_{I\subseteq [n-1]}\binom{n}{I}_{p^{-1}}\prod_{i\in I}\frac{x_i}{1-x_i},
\end{equation}
with numerical data $x_i = p^{(2n+i)(n-i)-(3n-i)s}$ for
$i\in[n-1]_0$.
\end{thm}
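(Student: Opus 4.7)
The plan is to reduce, via the equivalence~\eqref{equ:lie.ring.equiv}, to the computation of $\zideal_{L(R)}(s)$ and then employ the geometric framework of~\cite{Voll/05}. Every finite-index $\Zp$-ideal $\mcI \trianglelefteq L(R)$ is determined by its image $\Lambda \leq \overline{L(R)} \cong \Zp^{2n}$ together with its central intersection $M = \mcI \cap L(R)' \leq \Zp^n$, subject to $[\Lambda, L(R)] \subseteq M$; the standard reduction for nilpotent Lie rings of class~$2$ (cf.~\cite[Section~2]{Voll/10}) then expresses $\zideal_{L(R)}(s)$ as a weighted double sum over such pairs, the weight on the inner sum encoding the lifts of $\Lambda$ modulo $M$. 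Writing each $\Lambda$ uniquely as $p^k \Lambda_0$ with $\Lambda_0 \leq \Zp^{2n}$ \emph{maximal} in the sense of Section~\ref{subsec:lattices} and summing the geometric series over $k \geq 0$ produces the prefactor $(1-x_0)^{-1}$, reducing the task to a sum over maximal $\Lambda_0$ stratified by their types $\nu(\Lambda_0) = (J,\bfr_J)$ with $J \subseteq [2n-1]$.

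For each maximal $\Lambda_0$, the submodule $[\Lambda_0, L(R)] \leq L(R)'$ is read off from the commutator matrix $M(\mathbf{c})$ of~\eqref{equ:mc}. When $e = 1$ the block decomposition~\eqref{equ:B.block} collapses to the single $n \times n$ block $B^{(0)}(\bfY)$, which modulo~$p$ represents multiplication by the generic element $\sum_i Y_i \overline{\beta}_i$ of $k_R$ with respect to the basis $(\overline{\beta}_1, \dots, \overline{\beta}_n)$. In particular, $\det B^{(0)}(\bfY) \bmod p$ is (up to a scalar) the norm form $N_{k_R/\Fp}$, which vanishes only at the origin, so the projective Pfaffian hypersurface of $L(R)$ has no $\Fp$-rational points. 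By~\cite[Theorem~3]{Voll/05} the Euler factor then coincides with its ``approximative term''---no correction terms along Pfaffian points arise. Moreover, the invertibility of $B^{(0)} \bmod p$, together with the block-antisymmetric structure of $M(\bfY)$, implies that $[\Lambda_0, L(R)]$ depends on $\Lambda_0$ only through the sublattice of $\Zp^n$ obtained by projecting $\Lambda_0$ onto the two $\Zp^n$-summands of $\Zp^{2n}$ and summing the images; as $B^{(0)}$ acts as an $\SL_n(\Zp)$-equivalence on such projections, the relevant types collapse to those indexed by $J = I \subseteq [n-1]$ and $\bfr_I \in \N^{|I|}$, with the ``orthogonal'' degrees of freedom of $\Lambda_0$ being absorbed into a free-abelian sum.

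Finally I would assemble the contributions. The number of maximal $\Lambda_0$ whose projection-sum has type $(I,\bfr_I)$ involves the count~\eqref{equ:num.latt}, which---combined with the Schubert-cell description of Section~\ref{subsec:grass} and~\eqref{equ:des}---produces the Gaussian multinomial $\binom{n}{I}_{p^{-1}}$. The inner sum over $M \supseteq [\Lambda_0, L(R)]$ in $L(R)' \cong \Zp^n$ combines with the free-abelian sums accounting both for the lifts of $\Lambda_0$ modulo $M$ and for the ``orthogonal'' data mentioned above to yield the prefactor $\zeta_{\Zp^{2n}}(s)$ via~\eqref{equ:free.ab}. Summing the geometric series in each $r_i \in \N$ gives the factors $x_i/(1-x_i)$, once the exponents $(2n+i)(n-i)$ and $(3n-i)$ in $x_i = p^{(2n+i)(n-i)-(3n-i)s}$ are matched against the combined contributions of $[\Zp^{2n}:\Lambda_0]$, $[\Zp^n:M]$, and the lattice count~\eqref{equ:num.latt}. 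The main obstacle I anticipate is precisely this exponent bookkeeping: verifying that the reduction from types indexed by $J \subseteq [2n-1]$ to those indexed by $I \subseteq [n-1]$ follows cleanly from the non-degeneracy of $B^{(0)} \bmod p$ without introducing extraneous terms, and that the ``orphan'' contributions from projections of non-standard types package correctly into $\zeta_{\Zp^{2n}}(s)$. As a sanity check, I would specialize the resulting formula to $n \leq 3$ and compare with~\cite[Theorems~2.3, 2.7, 2.9]{duSWoodward/08}.
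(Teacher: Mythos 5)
Your key geometric input is exactly right and coincides with the paper's second proof of this theorem: for $e=1$ the reduction of $B(\bfY)$ modulo $p$ at $\mathbf{v}\in\F_p^n\setminus\{0\}$ is the Gram matrix of the nondegenerate pairing $(y,z)\mapsto T_{\mathbf{v}}(yz)$ on $k_R$ (equivalently, $\det B(\mathbf{v})\bmod p$ is a nonzero multiple of the norm form after a linear change of variables), so the Pfaffian hypersurface has no $\Fp$-rational points, and \cite[Theorem~3]{Voll/05} identifies $\zideal_{H(R)}(s)$ with the approximative term $W_0(p,p^{-s})$, with no correction terms. At that point one can simply read off $W_0$ from \cite[Section~4.2.1]{Voll/05}; alternatively (and this is the paper's first, official proof) one quotes \cite[Corollary~3.7]{SV1/13} and relabels $x_{n-i}\mapsto x_i$ via $\binom{n}{n-I}_{p^{-1}}=\binom{n}{I}_{p^{-1}}$.

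The part of your argument that would not survive being written out is the assembly of the explicit formula. The reduction of \cite[Lemma~1]{Voll/05} (essentially \cite[Lemma~6.1]{GSS/88}) sums over finite-index sublattices $\Lambda'$ of the \emph{derived subring} $L(R)'\cong\Zp^n$, not over the images $\Lambda\leq\overline{L(R)}\cong\Zp^{2n}$: the factor $\zeta_{\Zp^{2n}}(s)$ absorbs the entire (unconstrained) choice of $\Lambda$ inside $X(\Lambda')$, the geometric series producing $(1-x_0)^{-1}$ comes from scaling $\Lambda'=p^k\Lambda'_0$ inside $\Zp^n$ (each scaling multiplies $|L(R)':\Lambda'|^{2n-s}\,|L(R):X(\Lambda')|^{-s}$ by $p^{n(2n-s)-2ns}=x_0$), and the types are $(I,\bfr_I)$ with $I\subseteq[n-1]$ from the outset. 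Your version --- decomposing $\Lambda=p^k\Lambda_0$ in $\Zp^{2n}$, stratifying by types $J\subseteq[2n-1]$, and then ``collapsing'' to $I\subseteq[n-1]$ via projection-sums whose orthogonal degrees of freedom are ``absorbed'' into $\zeta_{\Zp^{2n}}(s)$ --- does not correspond to an actual computation: scaling $\Lambda$ by $p$ contributes only $p^{-2ns}$, not $x_0=p^{2n^2-3ns}$, and since the constraint $[\Lambda,L(R)]\subseteq M$ weakens under scaling of $\Lambda$, the sum does not factor as a geometric series in $k$ at all. Moreover you never compute the one genuinely nontrivial index, $|L(R):X(\Lambda')|$; this is precisely where the invertibility of $B(\mathbf{v})\bmod p$ is used (the congruences $\bfh B(\alpha^j)\equiv 0\bmod (p^\nu)_j$ are dominated by those with maximal modulus, giving $|L(R):X(\Lambda')|=p^{2n\sum_{\iota\in I}r_\iota}$), and without it the exponents $(2n+i)(n-i)$ and $(3n-i)$ in $x_i$ cannot be verified. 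Either cite $W_0$ as computed in \cite{Voll/05}, or redo the lattice sum with the roles of $\Lambda$ and $M$ in their correct places.
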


\begin{proof}
In order to keep the notation of this paper compatible
with~\cite{Voll/05}, we have labeled the numerical data in reverse
order to that of~\cite{SV1/13}.  By \cite[Corollary 3.7]{SV1/13}, we
have
\begin{equation*} 
\zideal_{H(R)}(s) = \zeta_{\Z_p^{2n}}(s)
\frac{1}{1-x_0}\sum_{I\subseteq [n-1]}\binom{n}{I}_{p^{-1}}\prod_{i\in
  I}\frac{x_{n-i}}{1-x_{n-i}}.
\end{equation*}
Define $n - I \subseteq [n-1]$ to be the set $\{ n - i \mid i \in I
\}$.  Our claim follows by the identity
\begin{equation*}
\binom{n}{n-I}_{p^{-1}} = \binom{n}{I}_{p^{-1}};
\end{equation*}
cf.~\cite[Remark
  2.13]{SV1/13}.
\end{proof}

The object of this section is to give a second proof of
Theorem~\ref{thm:inert}, based on the ideas of~\cite{Voll/05}. This
will prepare the way for arguments in the general case in the
remainder of the article.

Note that, since $e = 1$, we have $\mathcal{B} = (\beta_1, \dots,
\beta_n)$.  Hence $\mathcal{B}$ reduces modulo $\mathfrak{m} = pR$ to
an $\F_p$-basis of the residue field~$k_R$.  As in \cite{Voll/05}, we
consider the Pfaffian hypersurface $\mathfrak{P}_{H(R)} \subseteq
\mathbb{P}^{n-1}$ defined by the equation $\det(B(\mathbf{Y})) = 0$.

\begin{lemma} \label{lem:nonsing}
  Let $q = p^n$, and let $T : \F_q \to \F_p$ be a non-zero
  $\F_p$-linear map.  Let $\{ x_1, \dots, x_n \}$ be an $\F_p$-basis
  of $\F_q$.  Then the matrix $A_T = (T(x_ix_j))_{ij}\in \Mat_n(\F_p)$ is
  nonsingular.
\end{lemma}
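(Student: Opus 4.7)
The plan is to show that the kernel of $A_T$ is trivial by exploiting the multiplicative structure of $\F_q$. Concretely, I would interpret $A_T$ as the Gram matrix, in the basis $\{x_1,\dots,x_n\}$, of the $\F_p$-bilinear form $\phi_T : \F_q \times \F_q \to \F_p$ defined by $\phi_T(x,y) = T(xy)$. Nonsingularity of $A_T$ is then equivalent to non-degeneracy of $\phi_T$.

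To prove non-degeneracy, I would argue directly: suppose $A_T v = 0$ for some $v = (v_1,\dots,v_n)^t \in \F_p^n$, and set $y = \sum_{j=1}^n v_j x_j \in \F_q$. Linearity of $T$ gives, for every $i \in [n]$,
\begin{equation*}
0 = \sum_{j=1}^n T(x_i x_j) v_j = T\!\left( x_i \sum_{j=1}^n v_j x_j \right) = T(x_i y).
\end{equation*}
Since $\{x_i\}$ is an $\F_p$-basis of $\F_q$, $\F_p$-linearity of $T$ then yields $T(xy) = 0$ for every $x \in \F_q$. If $y \neq 0$, then multiplication by $y$ is an $\F_p$-linear bijection of $\F_q$, so $T$ vanishes identically, contradicting the hypothesis $T \neq 0$. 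Hence $y = 0$, and $\F_p$-linear independence of the $x_i$ forces $v = 0$.

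The argument is short and the only mild conceptual point, which I do not expect to be an obstacle, is the observation that multiplication by a non-zero element of $\F_q$ is an $\F_p$-linear automorphism; everything else is routine linear algebra. An equivalent formulation would be to write $T = \Tr_{\F_q/\F_p}(\lambda \cdot \,)$ for a unique $\lambda \in \F_q^\times$ and invoke non-degeneracy of the trace form of the separable extension $\F_q/\F_p$, but the direct argument above avoids that machinery.
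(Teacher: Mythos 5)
Your proof is correct and is essentially the paper's argument: the paper shows the map $x \mapsto T(x\,\cdot)$ from $\F_q$ to its dual is injective (hence an isomorphism) using exactly the same key fact that multiplication by a nonzero element of $\F_q$ is bijective, and identifies $A_T$ as the matrix of that map. Your kernel computation is just the coordinate version of this.
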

\begin{proof}
  Given $x \in \F_q$, consider the $\F_p$-linear map $U_{T,x}: \F_q
  \to \F_p$ given by $U_{T,x}(y) = T(xy)$.  Let $\F_q^\vee =
  \mathrm{Hom}_{\F_p}(\F_q, \F_p)$ be the dual space of $\F_q$.
  Observe that the map
\begin{equation*}
\F_q \rightarrow \F_q^\vee ,\quad x \mapsto U_{T,x}
\end{equation*}
is $\F_p$-linear and injective and therefore is an isomorphism of
$\F_p$-vector spaces.  The matrix $A_T$ is just the matrix of this map
with respect to the $\Fp$-basis $\{x_1, \dots, x_n \}$ and its dual
basis $\{ x_1^\vee, \dots, x_n^\vee \}$, so the claim follows.
\end{proof}

\begin{lemma} \label{lem:eee} The Pfaffian hypersurface $\mathfrak{P}_{H(R)}$ has no $\F_p$-rational points.
\end{lemma}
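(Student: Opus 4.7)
The plan is to prove the sharper statement that $B(\overline{\mathbf{y}}) \in \Mat_n(\F_p)$ is nonsingular for every nonzero $\overline{\mathbf{y}} \in \F_p^n$; since $\mathfrak{P}_{H(R)}$ is cut out by $\det B(\mathbf{Y}) = 0$, this will immediately yield the lemma.

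First I would exploit the hypothesis $e = 1$ to simplify the presentation. In this case the basis $\mathcal{B} = (\beta_1, \dots, \beta_n)$ reduces modulo $\mathfrak{m} = pR$ to an $\F_p$-basis $(\overline{\beta}_1, \dots, \overline{\beta}_n)$ of $k_R \cong \F_{p^n}$, and the block decomposition \eqref{equ:B.block} collapses to the single block $B(\mathbf{Y}) = B^{(0)}(\mathbf{Y})$; in particular, reducing the defining relations $d_i d_j = \sum_k \gamma^{ij}_k d_k$ mod $p$ yields the honest multiplication relations $\overline{\beta}_i \overline{\beta}_j = \sum_k \overline{\gamma^{ij}_k} \overline{\beta}_k$ in $k_R$ (cf.\ Remark \ref{rem:structure.constants} with $\ell_1 = 0$).

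Next, I would recast $B(\overline{\mathbf{y}})$ as a Gram matrix of the kind appearing in Lemma \ref{lem:nonsing}. Given $\overline{\mathbf{y}} = (\overline{y}_1, \dots, \overline{y}_n) \neq \mathbf{0}$, define the nonzero $\F_p$-linear functional $T : k_R \to \F_p$ by $T(\overline{\beta}_k) = \overline{y}_k$. Then the identity
$$B(\overline{\mathbf{y}})_{ij} = \sum_{k=1}^n \overline{\gamma^{ij}_k}\, \overline{y}_k = T(\overline{\beta}_i \overline{\beta}_j)$$
exhibits $B(\overline{\mathbf{y}})$ as the matrix $A_T$ of Lemma \ref{lem:nonsing}, with respect to the $\F_p$-basis $\{\overline{\beta}_1, \dots, \overline{\beta}_n\}$ of $\F_q = \F_{p^n}$. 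Lemma \ref{lem:nonsing} then gives $\det B(\overline{\mathbf{y}}) \neq 0$, as required.

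There is no serious obstacle: the argument is a direct translation through the residue-field isomorphism. What deserves highlighting is \emph{where} the unramified hypothesis is used, namely in two places: it ensures that $k_R$ is a field (so Lemma \ref{lem:nonsing} applies at all), and it guarantees that the reduction modulo $p$ of the structure constants faithfully reproduces multiplication in this field. Both ingredients fail for $e > 1$, which is precisely why the ramified case will require the additional Coxeter-theoretic bookkeeping introduced in the remainder of the paper.
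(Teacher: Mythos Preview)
Your proof is correct and follows essentially the same route as the paper's: define the linear functional $T$ on $k_R$ by $T(\overline{\beta}_k) = \overline{y}_k$, identify the reduction of $B(\mathbf{y})$ with the matrix $A_T$, and invoke Lemma~\ref{lem:nonsing}. One small correction to your closing commentary: $k_R$ is always a field regardless of $e$; what the unramified hypothesis actually buys is that $R/pR = k_R$, so that reducing the structure constants mod $p$ lands you in a field rather than in the Artinian ring $R/pR \cong k_R[\pi]/(\pi^e)$.
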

\begin{proof}
  Let $\mathbf{v} = (v_1, \dots, v_n)^t \in \Z^n$ be a column vector,
  and set $q = p^n$.  Let $\overline{v_i} \in \F_p$ be the reduction
  modulo $p$ of $v_i \in \Z$.  Choose an isomorphism $k_R \simeq \F_q$
  and use it to identify these two fields.  Now let
  $\overline{\beta_i} \in \F_q$ be the reduction modulo $p$ of
  $\beta_i \in R$, for~$i \in [n]$.  Recall that $\{
  \overline{\beta_1}, \dots, \overline{\beta_n} \}$ is an $\F_p$-basis
  of $\F_q$ and consider the $\F_p$-linear map $T_{\mathbf{v}} : \F_q
  \to \F_p$ given by $T_{\mathbf{v}}(\overline{\beta_i}) =
  \overline{v_i}$.  Observe that the reduction modulo $p$ of the
  matrix ${B}(\mathbf{v})$ is just the matrix $A_{T_{\mathbf{v}}}$
  defined in the statement of Lemma~\ref{lem:nonsing}.  The conclusion
  of that lemma then implies that $\det({B}(\overline{\mathbf{v}})) =
  0$ only if $\overline{\mathbf{v}} = 0$.
\end{proof}

In the notation of \cite{Voll/05}, Lemma~\ref{lem:eee} states that
$n_{\mathfrak{P}_{H(R)}}(p) = 0$ if $R$ is unramified.  Furthermore, it
implies that the Pfaffian hypersurface has no points defined
over~$\Q$.  Therefore it is vacuously smooth and has no lines.  Hence,
in the notation of \cite[Theorem~3]{Voll/05},
$\zeta^\triangleleft_{H(R),p}(s) = W_0(p,p^{-s})$, where
$W_0(p,p^{-s})$ is implicitly computed
in~\cite[Section~4.2.1]{Voll/05}. It is easily seen to match the
formula given in \eqref{equ:zeta.inert}. This concludes the second
proof of Theorem~\ref{thm:inert}.

\subsection{The general case}
We start off by describing the elementary divisors of matrices of the
form $B(\balpha)\in\Mat_n(\Zp)$, where $B(\bfY)\in\Mat_n(\Zp[\bfY])$
is defined following~\eqref{equ:mc} and $\balpha\in\Zp^n\setminus
p\Zp$. Recall the block decomposition of $B(\bfY)$ given
in~\eqref{equ:B.block}.

Given a real number $x$, we denote by $\lceil x \rceil$ the smallest
integer greater than or equal to $x$, and by $\lfloor x \rfloor$ the
largest integer less than or equal to $x$. 

\begin{dfn} \label{def:alpha.ceiling} Given $\balpha = (\alpha_1,
  \dots, \alpha_n) \in \Z_p^n \setminus p \Z_p^n$, put $\mu(\balpha) =
  \max \{ i \in [n] \mid \val(\alpha_i) = 0 \} \in [e]$ and define $\ceilalpha
  = \lceil \frac{\mu(\balpha)}{f} \rceil$.
\end{dfn}

\begin{lem}\label{lem:invert.new}
  Let $\balpha\in \Z_p^n \setminus p \Z_p^n$ with
  $\ceilalpha = m$.  Then
\begin{enumerate}
\item $B^{(m-1)}(\balpha)\in\GL_f(\Zp)$ and $B^{(\mu)}(\balpha)\in
  p\Mat_f(\Zp)$ for all $\mu \in [m, e-1]$.
\item $B^{(m+e-1)}(\balpha)\in\GL_f(\Zp)$ and $B^{(\mu)}(\balpha)\in p\Mat_f(\Zp)$ for all $\mu \in [m + e, 2e - 2]$.
\end{enumerate}
\end{lem}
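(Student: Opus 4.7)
The approach is to work modulo $p$: since a matrix $M \in \Mat_f(\Z_p)$ lies in $\GL_f(\Z_p)$ iff $\bar M \in \GL_f(\F_p)$, and in $p \Mat_f(\Z_p)$ iff $\bar M = 0$, the lemma reduces to two statements about matrices over $\F_p$. The structural information of Remark \ref{rem:structure.constants} gives a clean mod-$p$ expression for each $B^{(\mu')}(\balpha)$: writing $\mu' = \ell_1 e + \ell_0$ with $\ell_1 \in \{0, 1\}$ and $\ell_0 \in [e-1]_0$, parts (2) and (3) of that remark combine to yield
\[
B^{(\mu')}(\balpha)_{i_0, j_0} \equiv \eta^{\ell_1} \sum_{k=1}^{n - \ell_0 f} \gamma^{i_0 j_0}_k \, \alpha_{\ell_0 f + k} \pmod{p},
\]
for all $i_0, j_0 \in [f]$, since the ``lower'' structure constants $\gamma^{ij}_k$ for $k \in [\ell_0 f]$ lie in $p^{\ell_1 + 1} \Z_p$ and so contribute nothing modulo $p$ after dividing out the common $p^{\ell_1}$.

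For the vanishing halves of (1) and (2), suppose $\mu' \in [m, e-1]$, so $\ell_0 = \mu' \geq m = \ceilalpha$. Every index in the displayed sum then satisfies $\ell_0 f + k \geq m f + 1 > \mu(\balpha)$, and by maximality of $\mu(\balpha)$ each such $\alpha_{\ell_0 f + k}$ lies in $p \Z_p$. Hence $B^{(\mu')}(\balpha) \equiv 0 \pmod{p}$. The same index estimate, now with $\ell_0 = \mu' - e \geq m$, handles the case $\mu' \in [m + e, 2e - 2]$.

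For the invertibility halves, consider $\mu' \in \{m - 1, m + e - 1\}$, so $\ell_0 = m - 1$. Terms in the sum with $k > f$ satisfy $(m-1)f + k > m f \geq \mu(\balpha)$, so they vanish modulo $p$, reducing the entry to $\eta^{\ell_1} \sum_{k=1}^f \bar\gamma^{i_0 j_0}_k \, \bar\alpha_{(m-1)f + k}$ over $\F_p$. Since $\{\bar\beta_1, \dots, \bar\beta_f\}$ is an $\F_p$-basis of $k_R$, we may define an $\F_p$-linear map $T : k_R \to \F_p$ by $T(\bar\beta_k) = \bar\alpha_{(m-1)f + k}$ for $k \in [f]$. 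The reduced constants $\bar\gamma^{i_0 j_0}_k$ for $k \in [f]$ are precisely the coefficients of $\bar\beta_{i_0} \bar\beta_{j_0}$ in the basis $\{\bar\beta_k\}$ of $k_R$, so the displayed entry becomes $\eta^{\ell_1} T(\bar\beta_{i_0} \bar\beta_{j_0})$. Writing $\tilde\mu = \mu(\balpha) - (m-1) f \in [f]$, we have $T(\bar\beta_{\tilde\mu}) = \bar\alpha_{\mu(\balpha)} \neq 0$, so $T \neq 0$. Lemma \ref{lem:nonsing} applied with $q = p^f$, $\F_q = k_R$, and basis $\{\bar\beta_k\}$ then gives the nonsingularity of $A_T$, and since $\eta^{\ell_1}$ is a unit modulo $p$, we conclude $B^{(\mu')}(\balpha) \in \GL_f(\Z_p)$.

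The main obstacle is the two-tier bookkeeping: the ambient index $i$ splits as $i_1 f + i_0$, the block index $\mu'$ splits as $\ell_1 e + \ell_0$, and one must track the threshold $\mu(\balpha)$ through both splittings to match the mod-$p$ entries with the hypothesis of Lemma \ref{lem:nonsing}. Once this is in place, the argument is essentially a ramified version of the proof of Lemma \ref{lem:eee}, with the factor $\eta^{\ell_1}$ absorbing the effect of crossing the $p$-adic level for the $B^{(m+e-1)}$ statement in (2); boundary cases such as $m = e$, in which (2) is vacuous, require no separate treatment.
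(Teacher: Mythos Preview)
Your proof is correct and follows essentially the same route as the paper's: both reduce the block matrices $B^{(\mu')}(\balpha)$ modulo $p$ via the structure-constant identities of Remark~\ref{rem:structure.constants}, identify the resulting $f\times f$ matrix with (a unit scalar times) the matrix $A_T$ attached to the nonzero linear form $T(\bar\beta_k)=\bar\alpha_{(m-1)f+k}$, and conclude invertibility from Lemma~\ref{lem:nonsing}. Your treatment of the $B^{(m+e-1)}$ case is in fact slightly cleaner than the paper's, which passes through the multiplication-by-$\bar\eta$ isomorphism $\psi$ to produce a second linear form $T'$ rather than simply absorbing the unit scalar $\bar\eta^{\ell_1}$ as you do.
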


\begin{proof}  Let $\mu \in [e-1]$ and let $\bar{A}^{(\mu)} \in \Mat_f(\F_p)$ be the reduction modulo $p$ of $B^{(\mu)}(\balpha)$.
  From Remark~\ref{rem:structure.constants}(3) it follows
  that $\bar{A}^{(\mu)}_{i, j} = \sum_{\ell =
    1}^{(e-\mu)f} \overline{\gamma}^{ij}_\ell \overline{\alpha}_{\mu f
    + \ell}$ for all $i,j \in [f]$, where the overline denotes reduction modulo~$p$ and the
  $\gamma^{ij}_\ell$ are as defined immediately following
  \eqref{equ:mc}.  Our assumption on $\balpha$ immediately implies the
  second part of (1), whereas if $\mu = m - 1$ then we obtain
  $\bar{A}^{(m-1)}_{i,j} = \sum_{\ell = 1}^{f}
  \overline{\gamma}_{\ell}^{ij} \overline{\alpha}_{(m-1)f + \ell}$. We
  would like to prove that $\bar{A}^{(m-1)}$ is invertible.
    
  Since $\overline{\beta}_i \overline{\beta}_j = \sum_{\ell = 1}^f
  \overline{\gamma}^{ij}_\ell \overline{\beta}_\ell$ for all $i,j \in
           [f]$, we find that $\bar{A}^{(m-1)} = A_T$, in the notation
           of Lemma~\ref{lem:nonsing}, where $T : k_R \to \F_p$ is the
           non-zero $\F_p$-linear operator given by
           $T(\overline{\beta}_i) = \overline{\alpha}_{(m-1)f + i}$
           for all $i \in [f]$. Lemma \ref{lem:nonsing} thus implies
           that $\bar{A}^{(m-1)}$ is non-singular. This establishes
           the firt part of claim~(1).
  
The second part of (2) follows similarly from Remark~\ref{rem:structure.constants}
and the hypothesis on $\balpha$.  To establish the first part of (2), we let
$\psi: k_R \to k_R$ denote the $\F_p$-linear isomorphism corresponding to
multiplication by $\overline{\pi^e / p} \in k_R^\times$ and set
$(\overline{\alpha}^\prime_1, \dots, \overline{\alpha}^\prime_f) =
\psi(\overline{\alpha}_{(m-1)f + 1}, \dots, \overline{\alpha}_{mf})$.
It follows similarly to the previous case that $\bar{A}^{(m+e-1)} =
A_{T^\prime}$, where $T^\prime(\overline{\beta}_i) =
\overline{\alpha}_i^\prime$ for all $i \in [f]$.  Thus we again have
$\bar{A}^{(m+e-1)} \in \mathrm{GL}_f(\F_p)$ by Lemma
\ref{lem:nonsing}.
\end{proof}

Write $I_\ell$ for the $\ell \times \ell$ identity matrix.  For $m \in
[e]$ we define
\begin{equation*}
J_m = \left( \begin{array}{cc} 0 & I_{mf} \\ p^{-1} I_{(e-m)f} &
  0 \end{array} \right) \in \Mat_n(\Q_p).
\end{equation*}

\begin{cor} \label{lem:balpha}
Let $m \in [e]$, and let $\balpha = (\alpha_1, \dots, \alpha_n) \in
\Z_p^n \setminus p \Z_p^n$ be a vector such that $\ceilalpha = m$.
Then $B(\balpha) J_m \in \mathrm{GL}_n (\Z_p)$.
\end{cor}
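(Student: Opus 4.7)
The plan is to verify separately that (i) every entry of $BJ_m$ lies in $\Z_p$, and (ii) its reduction $\overline{BJ_m}$ modulo $p$ is invertible over $\F_p$; together these yield $BJ_m \in \GL_n(\Z_p)$. Throughout I treat $BJ_m$ as an $e \times e$ array of $f \times f$ blocks, indexed by $(i,j) \in [e]^2$, and rely on the block decomposition~\eqref{equ:B.block}.

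For step~(i), right-multiplication by $J_m$ divides the last $(e-m)f$ columns of $B$ by~$p$ while permuting block columns, so it suffices to show that block columns $m+1, \ldots, e$ of $B$ lie in $p\Mat_n(\Z_p)$. Inspecting~\eqref{equ:B.block}, the $(i,j)$ block of $B$ with $j \geq m+1$ is either $B^{(i+j-2)}$ with exponent $i+j-2 \in [m, e-1]$---which belongs to $p\Mat_f(\Z_p)$ by Lemma~\ref{lem:invert.new}(1)---or it already carries an explicit factor of $p$ in~\eqref{equ:B.block}.

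For step~(ii), I will argue that $\overline{BJ_m}$ is block anti-triangular: every block at position $(i,j)$ with $i+j > e+1$ vanishes modulo $p$, while the blocks on the antidiagonal $i+j = e+1$ are invertible. The antidiagonal is easily handled: for $j \in [e-m]$, the antidiagonal block of $BJ_m$ equals $B^{(m+e-1)}(\balpha)$ (the factor $p^{-1}$ from $J_m$ cancels the $p$ in~\eqref{equ:B.block}), which lies in $\GL_f(\Z_p)$ by Lemma~\ref{lem:invert.new}(2); for $j \in [e-m+1, e]$, the antidiagonal block equals $B^{(m-1)}(\balpha) \in \GL_f(\Z_p)$ by Lemma~\ref{lem:invert.new}(1). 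For the vanishing below the antidiagonal, I split into the regions $j \leq e-m$ and $j > e-m$, and further track whether the relevant exponent $\mu$ of $B^{(\mu)}$ exceeds $e-1$; in every resulting subcase the block is either of the form $B^{(\mu)} \in p\Mat_f(\Z_p)$ by Lemma~\ref{lem:invert.new}, or it carries a manifest $p$ factor from~\eqref{equ:B.block}. A suitable permutation of block rows then exhibits $\overline{BJ_m}$ as block triangular with invertible diagonal blocks, so $\overline{BJ_m} \in \GL_n(\F_p)$.

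I expect the main obstacle to be the combinatorial bookkeeping in step~(ii): the $(i,j)$ block of $BJ_m$ takes different shapes depending on how $(i,j)$ sits relative to the thresholds $j = e-m$ and $i + j + m - 2 = e$, and one must correctly identify which $B^{(\mu)}$ appears in each region before invoking Lemma~\ref{lem:invert.new}. Once the case analysis is set up cleanly, each individual verification reduces to a direct application of that lemma.
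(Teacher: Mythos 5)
Your proposal is correct and is exactly the argument the paper has in mind: its proof of this corollary consists of the single sentence that the claim is ``immediate from Lemma~\ref{lem:invert.new} and the block decomposition of~\eqref{equ:B.block}'', and your two steps (integrality of the last $(e-m)f$ block columns of $B$ after division by $p$, plus block anti-triangularity modulo $p$ with invertible antidiagonal blocks $B^{(m+e-1)}$ and $B^{(m-1)}$) are precisely the details being suppressed. All the case checks you outline do go through as claimed.
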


\begin{proof}
  This is immediate from Lemma~\ref{lem:invert.new} and the block
  decomposition of~\eqref{equ:B.block}.
\end{proof}

We now state the main result of this article. Recall the statistics $\len$,
$\len^{[n-2]}$, and $\Des$ on the Coxeter group~$S_n$ that were defined
in Section~\ref{subsec:cox}.

\begin{thm} \label{thm:main} Let $R$ be a finite extension of
  $\Zp$ with inertia degree $f$ and ramification index~$e$. Set
  $n=ef$. Then
  \begin{equation} \label{equ:main}
\zeta_{H(R)}^{\triangleleft}(s) =
\zeta_{\Z_p^{2n}}(s) \frac{\sum_{w \in S_n} p^{- \len (w) + 2
    f\left\lfloor\frac{\len^{[n-2]}(w)}{f}\right\rfloor s} \prod_{j \in
    \mathrm{Des}(w)} x_j }{\prod_{i = 0}^{n-1} (1 - x_i)},
\end{equation}
with numerical data $x_i = p^{(2n+i)(n-i) - (3n-i)s}$ for
$i\in[n-1]_0$.
\end{thm}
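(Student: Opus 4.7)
The plan is to adapt the geometric framework of \cite{Voll/05} to the ramified setting, using the block decomposition \eqref{equ:B.block} and the almost-invertibility of $B(\boldsymbol{\alpha})$ furnished by Corollary \ref{lem:balpha}. By \eqref{equ:lie.ring.equiv}, it suffices to compute the ideal zeta function of $L(R)$. A finite-index ideal $\mathfrak{I} \triangleleft L(R)$ decomposes as a pair $(\Lambda_2, \Lambda_1)$, where $\Lambda_2 = \mathfrak{I} \cap L(R)' \leq \Z_p^n$ and $\Lambda_1 \leq \Z_p^{2n}$ is the image of $\mathfrak{I}$ in $\overline{L(R)}$, subject to the commutator condition $[\Lambda_1, \overline{L(R)}] \subseteq \Lambda_2$. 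By \eqref{equ:ring.pres} and \eqref{equ:mc}, if $\boldsymbol{\alpha}$ is a representative of a direction in $\Z_p^n / \Lambda_2$, the compatibility of $v = (v_1, v_2) \in \Lambda_1$ with that direction yields a pair of parallel constraints $B(\boldsymbol{\alpha})^T v_1, B(\boldsymbol{\alpha})^T v_2 \in p^r \Z_p^n$ (for an appropriate depth $r$), the two constraints decoupling because of the antisymmetric block form of $M(\mathbf{c})$.

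Next I would enumerate $\Lambda_2$ by elementary-divisor type $(I, \mathbf{r}_I)$ and, within each type, by coset representatives parameterized via the Schubert cell decomposition of the Grassmannians $\Gr(n, n-i)$ for $i \in I$ (Section \ref{subsec:grass}). Combining these choices across the layers of $I$, each $\Lambda_2$ corresponds to a pair $(w, \mathbf{r}_I)$ with $w \in S_n$ and $\Des(w) = I$, the cell $\Omega_w$ contributing the volume factor $p^{-\len(w)}$ (cf.~\eqref{equ:des}). I would then select a representative $\boldsymbol{\alpha}$ in the principal direction with $\mu(\boldsymbol{\alpha}) = w(n)$, which holds on a Zariski-dense subset of $\Omega_w$; Corollary \ref{lem:balpha} then gives that $B(\boldsymbol{\alpha})$ has elementary divisors $(1^{m(w)f}, p^{(e-m(w))f})$ with $m(w) = \ceilalpha = e - \lfloor \len^{[n-2]}(w)/f \rfloor$, using $\len^{[n-2]}(w) = n - w(n)$.

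Since the two constraints on $v_1, v_2$ are independent, the count of admissible $\Lambda_1$ acquires a factor of $p^{2(e - m(w))fs} = p^{2f \lfloor \len^{[n-2]}(w)/f \rfloor s}$ relative to the unconstrained case. The unconstrained $\Lambda_1$-sum produces the prefactor $\zeta_{\Z_p^{2n}}(s)$ via \eqref{equ:free.ab}, and converting the sum over $\mathbf{r}_I$ to geometric series --- so that each $i \in \Des(w)$ contributes $x_i/(1 - x_i)$ in the expansion, while the ``padding'' factors $(1 - x_i)^{-1}$ for $i \notin \Des(w)$ arise from the compatible $\Lambda_1$-enumeration --- assembles the closed-form expression \eqref{equ:main}. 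In the unramified case $e = 1$ one has $f = n$ and $\len^{[n-2]}(w) \leq n - 1 < f$, so $\lfloor \len^{[n-2]}(w)/f \rfloor = 0$, and the formula collapses to Theorem \ref{thm:inert}, serving as a consistency check.

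The main obstacle is the precise identification $\mu(\boldsymbol{\alpha}) = w(n)$ for a generic representative $\boldsymbol{\alpha}$ of $\Omega_w$, together with the verification that the count of admissible $\Lambda_1$ is uniform across the cell. This uniformity requires leveraging Lemma \ref{lem:invert.new} to show that the elementary divisors of $B(\boldsymbol{\alpha})$ depend only on $w$, not on the particular coset representative chosen. A secondary difficulty is the careful derivation of the ``padding'' contribution from the non-descent positions, ensuring that the full sum assembles into the stated rational function rather than a more complicated expression involving additional combinatorial corrections --- in essence, verifying that no further Pfaffian-type geometric correction is needed once the elementary-divisor structure of $B(\boldsymbol{\alpha})$ from Corollary \ref{lem:balpha} has been accounted for.
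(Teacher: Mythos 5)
Your overall strategy is the one the paper actually follows: linearize via \eqref{equ:lie.ring.equiv}, reduce to a sum over maximal sublattices $\Lambda'\leq L(R)'$ of the quantity $|L(R)':\Lambda'|^{2n-s}|L(R):X(\Lambda')|^{-s}$ (this is \cite[Lemma~1]{Voll/05}, which packages your $(\Lambda_1,\Lambda_2)$-decomposition; it is also where the factors $\zeta_{\Z_p^{2n}}(s)$ and $(1-x_0)^{-1}$ come from, rather than from ``padding'' in the $\Lambda_1$-count), then stratify by elementary-divisor type and by Schubert cell, and use Corollary~\ref{lem:balpha} to read off the elementary divisors of $B(\balpha)$. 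Your identity $m(w)=e-\lfloor\len^{[n-2]}(w)/f\rfloor$ is correct.

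However, the step you yourself flag as ``the main obstacle'' is precisely the heart of the proof, and as stated your treatment of it would fail. First, $|L(R):X(\Lambda')|$ is governed not by a single ``principal direction'' but by the simultaneous system \eqref{equ:cong.ramified} over \emph{all} columns $\alpha^j$; one must argue that the congruences with maximal modulus (those with $j\in[i+1,n]$, $i=\max I$) dominate, and among those the relevant invariant is $\max_j\lceil\alpha^j\rceil$, not the value at one chosen column. Second, an exact count is required, so ``$\mu(\balpha)=w(n)$ on a Zariski-dense subset of $\Omega_w$'' is not enough: lattices in the same cell but off the dense subset would be miscounted. The resolution is that the invariant $\kappa(\Lambda')=e-\max_j\lceil\alpha^j\rceil$ factors through the reduction mod $p$ of the span of the last $n-i$ columns, i.e.\ through a map to $\Gr(n,n-i;\Fp)$ whose fibres all have the same cardinality \eqref{equ:fibre.sizes.phi}; on the Grassmannian, the condition $\kappa=\kappa_0$ is a union of entire Schubert cells (the lowest nonzero row of the normal form is exactly $w(n)$), so the identification with $\lfloor\len^{[n-2]}(w)/f\rfloor$ holds on every point of every cell, with no genericity needed. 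Without this fibration argument (and the subsequent parabolic-coset/inclusion--exclusion step converting $\Des(w)\subseteq\{i\}$ together with $I\setminus\{i\}$ into the condition $\Des(w)=I$), the sum does not assemble into \eqref{equ:main}.
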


\begin{proof}
  We saw in \eqref{equ:lie.ring.equiv} that
  $\zeta_{H(R)}^{\triangleleft}(s) = \zeta_{L(R)}^{\triangleleft}(s)$,
  where $L(R)$ is the Heisenberg Lie ring over~$R$; cf.~Section
  \ref{sec:liering}.  The abelianization $\overline{L(R)}$ and the
  derived subring $L(R)^\prime$ are free $\Z_p$-modules of rank $2n$
  and $n$, respectively.  By~\cite[Lemma 1]{Voll/05}, which is
  essentially~\cite[Lemma 6.1]{GSS/88}, we have
\begin{equation} \label{equ:gafa.paper.eq}
 \zideal_{L(R)}(s) = \zeta_{\Z_p^{2n}}(s) \zeta_p(3ns - 2n^2)
 \sum_{\Lambda^\prime \leq L(R)^\prime \atop \Lambda^\prime \,
   \mathrm{maximal}} | L(R)^\prime : \Lambda^\prime |^{2n-s} | L(R) :
 X(\Lambda^\prime) |^{-s},
 \end{equation}
 where, for every finite-index sublattice $\Lambda^\prime \leq
 L(R)^\prime$, we define $X(\Lambda^\prime)$ to be the sublattice of
 $L(R)$ such that $X(\Lambda^\prime) / \Lambda^\prime$ is the center
 of $L(R) / \Lambda^\prime$.  Note that $ \zeta_p(3ns - 2n^2) =
 \frac{1}{1 - x_0}$.

 Let $\Lambda' \leq L(R)' \simeq \Z_p^n$ be a maximal sublattice of
 finite index, of type $\nu(\Lambda')=(I,\bfr_I)$, where
 $I=\{i_1,\dots,i_\ell\}_<\subseteq[n-1]$ and
 $\bfr_I=(r_{i_1},\dots,r_{i_\ell})\in\N^\ell$; cf.\
 Section~\ref{subsec:lattices}. We write $i$ for~$i_\ell$.  As in
 Section \ref{subsec:lattices}, we identify $\Lambda^\prime$ with a
 coset $\boldsymbol{\alpha} \Gamma_{(I, \bfr_I)}$, where $\boldsymbol{\alpha} \in
 \mathrm{SL}_n(\Z_p)$.  For $j \in [n]$, let $\alpha^j$ denote the
 $j$-th column vector of~$\boldsymbol{\alpha}$.  Recalling
 Definition~\eqref{def:alpha.ceiling} we set
\begin{equation} \label{def:kappa}
\kappa(\Lambda') := e-\max\{\lceil \alpha^j \rceil \mid n-i < j \leq
n\}\in [e-1]_0.
\end{equation}
An informal description of $\kappa(\Lambda')$ is as follows. Consider
the reduction modulo $p$ of the $n\times (n-i)$ matrix composed of the
last $n-i$ columns of $\boldsymbol{\alpha}$. Then $\kappa(\Lambda')=\kappa$ if and
only if the last $\kappa f$ rows of this matrix are zero, but the
$(\kappa+1)$-st block of $f$ rows from the bottom contains a nonzero
element.

The most mysterious ingredient of \eqref{equ:gafa.paper.eq} is the
quantity $|L(R) : X(\Lambda^\prime)|$, which we will now compute.

\begin{lemma} \label{lem:xlambda}
Let $\Lambda^\prime \leq L(R)^\prime$ be a maximal sublattice of type
$\nu(\Lambda^\prime) = (I, \bfr_I)$.  Then
$$ |L(R) : X(\Lambda^\prime)| = p^{2 \left( n {\sum_{\iota \in I}
    r_\iota} - \kappa(\Lambda^\prime) f \right)}.$$
\end{lemma}
\begin{proof}
  By \cite[Theorem~6]{Voll/05}, $|L(R) : X(\Lambda^\prime)|$ is equal
  to the index in $\overline{L(R)} \cong \Zp^{2n}$ of the sublattice
  of simultaneous solutions to the following system of linear
  congruences:
\begin{equation}\label{equ:cong.ramified}
  \olbfg M(\alpha^j) \equiv 0 \bmod \left(p^\nu\right)_j \textrm{ for
  }j\in[n]
\end{equation}
in variables $\olbfg = (g_1,\dots,g_{2n})$.  Here $M(\alpha^j)$ is the
commutator matrix $M(\mathbf{Y})$ of \eqref{equ:mc} evaluated
at~$\alpha^j$.  It is clear from \eqref{equ:mc} that the index of the
solution sublattice of \eqref{equ:cong.ramified} in $\overline{L(R)}$
is the square of the index in $\Z_p^n$ of the solution sublattice of
the system
\begin{equation} \label{equ:cong.simplified}
  \bfh B(\alpha^j) \equiv 0 \bmod \left(p^\nu\right)_j \textrm{ for
  }j\in[n],
\end{equation}
where $\mathbf{h} = (h_1, \dots, h_n) \in \Z_p^n$.  By Lemma
\ref{lem:balpha}, each matrix $B(\alpha^j)\in\Mat_{n}(\Zp)$ becomes
invertible after all the entries in its last $(e- \lceil \alpha^j
\rceil )f$ columns have been divided by $p$.  Therefore, $\mathbf{h} =
(h_t)\in\Zp^{n}$ is a solution to \eqref{equ:cong.simplified} if and
only if, for all $j \in [n]$,
\begin{alignat*}{2}
h_t & \equiv 0 \textrm{ mod } (p^\nu)_j& \quad\textrm{ if } t &\leq \lceil
\alpha^j \rceil f, \\ p h_t & \equiv 0 \textrm{ mod } (p^\nu)_j&\quad
\textrm{ if } t &> \lceil \alpha^j \rceil f.
\end{alignat*}
It follows that the congruences where $(p^\nu)_j$ is maximal, namely
those with $j \in [i+1,n]$, dominate all the others.
Hence, $\mathbf{h}\in\Zp^n$ is a solution to
\eqref{equ:cong.simplified} if and only if
\begin{alignat*}{2}
h_t & \equiv 0 \textrm{ mod } p^{\sum_{\iota \in I} r_\iota} &
\quad\textrm{ if } t &\leq (e - \kappa(\Lambda^\prime)) f, \\ h_t &
\equiv 0 \textrm{ mod } p^{\left( \sum_{\iota \in I} r_\iota \right) -
  1} & \quad\textrm{ if } t & > (e - \kappa(\Lambda^\prime)) f.
\end{alignat*}
Recalling that $n = ef$, it follows that the index in
$\overline{L(R)}$ of the sublattice of simultaneous solutions to the
congruences \eqref{equ:cong.ramified} is the quantity in the statement
of the lemma.
\end{proof}

It is obvious from the definition of the type of a lattice that, if
$\Lambda^\prime \leq L(R)^\prime$ is a sublattice of type $(I,
\bfr_I)$, then
\begin{equation} \label{equ:type.index}
| L(R)^\prime : \Lambda^\prime | = p^{\sum_{\iota \in I} (n - \iota) r_\iota}.
\end{equation}
Given $\kappa \in [e-1]_0$ and a type $(I, \bfr_I)$, define
\begin{equation*}
\mathcal{N}_{(I, \bfr_I)}^\kappa = \#\{\Lambda'\leq L(R)'\mid \nu(\Lambda') =
(I, \bfr_I), \kappa(\Lambda')= \kappa\}.
\end{equation*}
It follows from~\eqref{equ:gafa.paper.eq},~\eqref{equ:type.index}, and Lemma~\ref{lem:xlambda}
that
\begin{equation} \label{equ:i.kappa}
\zeta^\triangleleft_{L(R)}(s) = \frac{\zeta_{\Z_p^{2n}}(s)}{1 - x_0}
\sum_{I \subseteq [n-1]} \sum_{\bfr_I \in \N^{|I|}} \sum_{\kappa =
  0}^{e-1} \mathcal{N}_{(I, \bfr_I)}^\kappa p^{\left( \sum_{\iota \in
    I} (n - \iota) r_\iota \right)(2n - s) - 2s\left( n \sum_{\iota
    \in I}r_\iota - \kappa f \right)}.
\end{equation}

As preparation for computing the numbers $\mathcal{N}_{(I,
  \bfr_I)}^\kappa$, we fix a type $(I, \bfr_I)$ as above and consider
the surjective map
\begin{align*}\label{map:phi}
  \phi_{(I, \bfr_I)} :\{\Lambda' \mid \nu(\Lambda') = (I,\mathbf{r}_I)\} &\rarr
  \Gr(n,n-i;\Fp)\\
  \boldsymbol{\alpha} \Gamma_{(I, \bfr_I)} &\mapsto \langle \ol{\alpha^j} \mid i < j
  \leq n\rangle_{\Fp}.
\end{align*}
As before, we identify lattices of type $(I, \bfr_I)$ with cosets
$\boldsymbol{\alpha} \Gamma_{(I, \bfr_I)}$ for $\boldsymbol{\alpha}\in\Gamma$.  Informally,
$\phi(\boldsymbol{\alpha}\Gamma_{(I, \bfr_I)})$ is the subspace of $\Fp^n$ spanned
by the reduction modulo $p$ of the last $n-i$ columns of the
matrix~$\boldsymbol{\alpha}\in\Gamma = \SL_n(\Zp)$.

\begin{lem}
The fibres of $\phi_{(I, \bfr_I)}$ all have the same cardinality
\begin{equation}\label{equ:fibre.sizes.phi}
p^{-i(n-i)} \binom{i}{I \setminus \{ i \} }_{p^{-1}} p^{\sum_{\iota\in I}
  r_\iota\iota(n-\iota)}.
\end{equation}
\end{lem}

\begin{proof}
  The map $\phi_{(I, \bfr_I)}$ is just the natural surjection
\begin{equation*}\Gamma/\Gamma_{(I,\mathbf{r}_I)} \rarr \Gamma/\Gamma_{(\{i\},1)}, \quad\boldsymbol{\alpha} \Gamma_{(I,\mathbf{r}_I)} \mapsto \boldsymbol{\alpha}\Gamma_{(\{i\},1)}.
\end{equation*}
Of course $|\Gamma:\Gamma_{(I,\mathbf{r}_I)}| =
|\Gamma:\Gamma_{(\{i\},1)}||\Gamma_{(\{i\},1)}:\Gamma_{(I,\mathbf{r}_I)}|$. By
\eqref{equ:num.latt} we have
\begin{equation*}
|\Gamma:\Gamma_{(I,\mathbf{r}_I)}| = \binom{n}{I}_{p^{-1}}
p^{\sum_{\iota\in I}r_\iota\iota(n-\iota)} =
\binom{n}{n-i}_{p^{-1}}p^{r_{i}i(n-i)} \cdot
\binom{i}{I \setminus \{ i \} }_{p^{-1}}p^{\sum_{\iota\in I\setminus
    \{i\}}r_\iota\iota(n-\iota)}.
\end{equation*}
Formula~\eqref{equ:fibre.sizes.phi} for the index
$|\Gamma_{(\{i\},1)}:\Gamma_{(I,\mathbf{r}_I)}|$ follows now
from~\eqref{equ:grass}, as
\begin{equation*}
|\Gamma:\Gamma_{(\{i\},1)}| = \#\Gr(n,n-i;\Fp) = \binom{n}{n-i}_{p^{-1}}p^{i(n-i)}.
\qedhere
\end{equation*}
\end{proof}

Consider the following filtration on $\Gr(n,n-i;\Fp)$. Let
$(\epsi_1,\dots,\epsi_n)$ denote the standard $\Fp$-basis of $V = \Fp^n$,
and consider the flag $\left(V_d\right)_{d=0}^e = \left(\langle
\epsi_1,\dots,\epsi_{fd}\rangle_{\Fp}\right)_{d=0}^e$. Define
\begin{equation*}
\psi:\Gr(n,n-i;\Fp) \rarr [e],\quad
W \mapsto \min\{d \mid W\subseteq V_d\}.
\end{equation*}
One verifies easily that the fibres of $\psi$ are unions of Schubert
cells.  Indeed, if $\lambda \in [e]$ and $W \in \Gr(n,n-i;\F_p)$, then
$\psi(W) = \lambda$ if and only if the bottom $(e - \lambda)f$ rows of
the matrix of $W$ (cf.~Section~\ref{subsec:grass}) consist of zeroes,
whereas the previous block of $f$ rows does contain a non-zero matrix
element.  It is clear from the discussion in
Section~\ref{subsec:grass} that the lowest-positioned non-zero element
in the matrix of $W$ is a $1$ in the $(w(n), n - i)$ position, where
$w \in S_n$ is such that $W \in \Omega_w(\Fp)$.  In other words,
$\psi(W) = \lambda$ if and only if $w(n) \in [(\lambda - 1)f+1,
\lambda f]$.  Recalling that $\len^{[n-2]}(w) = n - w(n)$, this
condition is clearly equivalent to
$\left\lfloor\frac{\len^{[n-2]}(w)}{f}\right\rfloor = e-\lambda$.
Therefore, for every $\lambda \in [e]$,

\begin{equation}\label{equ:fibre.sizes.psi}
  \psi^{-1}(\lambda)= \bigcup_{w\in S_n,\; \Des(w)
    \subseteq\{i\} \atop \left\lfloor\frac{\len^{[n-2]}(w)}{f}\right\rfloor =
    e-\lambda}\Omega_w(\Fp).
\end{equation} 

Now consider the composition
\begin{equation*}
  \psi\circ\phi_{(I, \bfr_I)}:\{\Lambda'\leq L(R)'\mid \nu(\Lambda') = (I, \mathbf{r}_I)\} \rarr [e],\quad
  \boldsymbol{\alpha} \Gamma_{\nu} \mapsto \psi\left( \langle \ol{\alpha^j} \mid
    i < j \leq n\rangle_{\Fp}\right).
\end{equation*}
From the definition of $\kappa(\Lambda^\prime)$ in~\eqref{def:kappa}
it is evident for all $\kappa \in [e-1]_0$ that $\kappa(\Lambda') =
\kappa$ if and only if $\psi\circ\phi_{(I, \bfr_I)}(\Lambda') = e -
\kappa.$ Thus, by \eqref{equ:fibre.sizes.phi},
\eqref{equ:fibre.sizes.psi}, and the fact that
$|\Omega_w(\Fp)|=p^{i(n-i)-\len(w)}$ for all $w\in S_n$
(cf.~Section~\ref{subsec:grass}), we obtain
\begin{align} \label{equ:niri}
\mathcal{N}_{(I, \bfr_I)}^\kappa &= {\#\{\Lambda'\leq L(R)'\mid \nu(\Lambda') =
  (I, \mathrm{r}_I), \kappa(\Lambda')= \kappa\}}\\ &=\nonumber
\#\{\Lambda'\leq L(R)'\mid \nu(\Lambda') = (I,\bfr_I), \psi(\phi_{(I,
  \bfr_I)}(\Lambda'))= e - \kappa\} \\ &= \nonumber \left(
\sum_{w\in S_n,\; \Des(w)\subseteq\{i\},\;
  \left\lfloor\frac{\len^{[n-2]}(w)}{f}\right\rfloor =
  \kappa}p^{-\len(w)}\right) \binom{i}{I \setminus \{ i \}}_{p^{-1}} p^{\sum_{\iota\in
    I} r_{\iota}\iota(n-\iota)} \\ &=\nonumber \alpha^\kappa_I\prod_{\iota\in I}
p^{r_{\iota}\iota(n-\iota)},
\end{align}
where we set
\begin{equation} \label{equ:alphakappadef}
\alpha_I^\kappa = \left( \sum_{w\in
      S_n,\; \Des(w)\subseteq\{i\},\;
      \left\lfloor\frac{\len^{[n-2]}(w)}{f}\right\rfloor =
      \kappa}p^{-\len(w)} \right) \binom{i}{I \setminus \{ i \} }_{p^{-1}}.
\end{equation}

\begin{lemma}
Let $\alpha_I^\kappa$ be the quantity defined in~\eqref{equ:alphakappadef}.  Then
\begin{equation*}
\alpha^\kappa_I = \sum_{w \in S_n, \mathrm{Des}(w) \subseteq I \atop  \left\lfloor\frac{\len^{[n-2]}(w)}{f}\right\rfloor = \kappa} p^{- \len(w)}.
\end{equation*}
\end{lemma}
\begin{proof}
  It is easy to see that an element $w \in S_n$ is the unique element
  of shortest length in its coset $w W_{[i-1]}$ if and only if
  $\mathrm{Des}(w) \cap [i-1] = \varnothing$.  Writing an arbitrary
  element $w \in S_n$ in the form $w = w^{[i-1]} w_{[i-1]}$ as in
  Section~\ref{subsec:cox}, we find that $w^{[i-1]}$ is the unique
  element such that $\mathrm{Des}(w^{[i-1]}) \cap [i-1] = \varnothing$
  and $w^{[i-1]}(j) = w(j)$ for all $j > i$.  In particular,
  $\mathrm{Des}(w^{[i-1]}) \cap [i+1,n] = \mathrm{Des}(w) \cap
  [i+1,n]$, whereas $\mathrm{Des}(w_{[i-1]}) \cap [i-1] =
  \mathrm{Des}(w) \cap [i-1]$.  It follows that the elements $w \in
  S_n$ satisfying $\mathrm{Des}(w) \subseteq I$ are precisely those
  for which $\mathrm{Des}(w^{[i-1]}) \subseteq \{ i \}$ and
  $\mathrm{Des}(w_{[i-1]}) \subseteq I \setminus \{ i \}$.  Finally,
  it is clear that $w(n) = w^{[i-1]}(n)$ and hence $\len^{[n-2]}(w) =
  \len^{[n-2]}(w^{[i-1]})$.  Since $\len(w) = \len(w^{[i-1]}) +
  \len(w_{[i-1]})$ and
\begin{equation*}
\binom{i}{I \setminus \{ i \}}_{p^{-1}} = \sum_{w \in W_{[i-1]} \atop \mathrm{Des}(w) \subseteq I \setminus \{ i \} } p^{- \len (w)},
\end{equation*}
the desired equality follows.
\end{proof}

Finally, we have all the ingredients necessary to compute
$\zeta^{\triangleleft}_{H(R)}(s)$ and finish the proof of
Theorem~\ref{thm:main}. Indeed, a simple calculation
using~\eqref{equ:niri} shows that
\begin{multline*}
 \sum_{\bfr_I \in \N^{| I |}} \mathcal{N}_{(I, \bfr_I)}^\kappa
 p^{\left( \sum_{\iota \in I} (n - \iota) r_\iota \right)(2n - s) -
   2s\left( n \sum_{\iota \in I}r_\iota -\kappa f \right)}
 =\\ \alpha_I^\kappa p^{2s\kappa f} \sum_{\bfr_I \in \N^{| I |}}
 \prod_{\iota \in I} \left( p^{(2n + \iota)(n - \iota) - (3n -
   \iota)s} \right)^{r_\iota} = \alpha_I^\kappa p^{2s\kappa f}
 \prod_{\iota \in I} \frac{x_\iota}{1 - x_\iota},
\end{multline*}
where $x_\iota = p^{(2n + \iota)(n - \iota) - (3n - \iota)s}$ for
$\iota\in I \subseteq[n-1]$ as in the statement of
Theorem~\ref{thm:main}.  By~\eqref{equ:i.kappa} this implies
\begin{equation*}
{\zeta^{\triangleleft}_{H(R)}(s) \frac{1-x_0}{\zeta_{\Zp^{2n}}(s)}} =
\sum_{I \subseteq [n-1]} \sum_{\kappa = 0}^{e-1} \alpha_I^\kappa
p^{2\kappa f s} \prod_{\iota \in I} \frac{x_\iota}{1 - x_\iota}.
\end{equation*}
Bringing the right hand side to a common denominator, we get
\begin{equation*}
\frac{\sum_{\kappa = 0}^{e-1} \sum_{I \subseteq [n-1]}  \beta^\kappa_I  p^{2 \kappa f
    s}\prod_{\iota \in I} x_\iota}{\prod_{i = 1}^{n-1}
  (1 - x_i)},
\end{equation*}
where
\begin{equation*}
\beta^\kappa_I = \sum_{J \subseteq I} (-1)^{| I | - | J |} \alpha^\kappa_J = \sum_{w \in S_n, \,\Des(w) = I \atop \left\lfloor\frac{\len^{[n-2]}(w)}{f}\right\rfloor = \kappa} p^{- \len(w)},
\end{equation*}
the last equality following from a simple inclusion-exclusion
argument; cf.~\cite[(1.34)]{Stanley/12}.  Hence,
\begin{align*}
 \zeta^{\triangleleft}_{H(R)}(s) \frac{\prod_{i = 0}^{n-1}
   (1-x_i)}{\zeta_{\Zp^{2n}}(s)} &= \sum_{I \subseteq [n-1]}
 \sum_{\kappa = 0}^{e-1} p^{2 \kappa f s} \left( \sum_{w \in S_n, \,
   \Des(w) = I \atop \left\lfloor
   \frac{\len^{[n-2]}(w)}{f}\right\rfloor = \kappa} p^{- \len(w)}
 \right) \prod_{\iota \in I} x_\iota \\ &= \sum_{w \in S_n} p^{-
   \len(w) + 2f \left\lfloor \frac{\len^{[n-2]}(w)}{f}\right\rfloor s}
 \prod_{j \in \Des(w)} x_j,
\end{align*}
as claimed.
\end{proof}

\begin{rem}
Theorem~\ref{thm:inert} is indeed a special case of
Theorem~\ref{thm:main}. If $e = 1$ and $f=n$, then $\len^{[n-2]}(w)
\leq n - 1 < f$ for all $w \in S_n$, and hence
$\left\lfloor\frac{\len^{[n-2]}(w)}{f}\right\rfloor=0$ for all $w\in S_n$.  One
verifies easily that
$$\frac{\sum_{w \in S_n} p^{- \len (w)} \prod_{j \in \mathrm{Des}(w)}
  x_j }{\prod_{i = 0}^{n-1} (1 - x_i)} =
\frac{1}{1-x_0}\sum_{I\subseteq [n-1]}\binom{n}{I}_{p^{-1}}\prod_{i\in
  I}\frac{x_i}{1-x_i},$$ by bringing the right hand side to a common
denominator as in \cite[Section 4.1]{Voll/05} and using
\eqref{equ:des}.
\end{rem}

\begin{cor}\label{cor:funeq}
Let $R$ be a finite extension of $\Z_p$ with inertia degree $f$ and ramification index~$e$. Set
  $n=ef$.  Then $\zeta_{H(R)}^{\triangleleft}(s)$ satisfies the following functional equation:
\begin{equation*} 
\zeta_{H(R)}^{\triangleleft}(s)|_{p \rarr p^{-1}} = (-1)^{3n}
p^{\binom{3n}{2} - (5n + 2 (e-1)f)s}
\zeta_{H(R)}^{\triangleleft}(s).
\end{equation*}
\end{cor}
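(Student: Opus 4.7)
The strategy is to apply the substitution $p \mapsto p^{-1}$ directly to the explicit formula \eqref{equ:main} for $\zeta_{H(R)}^{\triangleleft}(s)$ and to track the three pieces of the formula separately: the prefactor $\zeta_{\Z_p^{2n}}(s)$, the denominator $\prod_{i=0}^{n-1}(1-x_i)$, and the Coxeter-theoretic numerator.  The first two transform by elementary manipulations.  Since $x_i \mapsto x_i^{-1}$ under $p\mapsto p^{-1}$, the identity $(1-X^{-1})=-X^{-1}(1-X)$ gives
\[
\prod_{i=0}^{n-1}(1-x_i)\Big|_{p\to p^{-1}} = (-1)^n\Bigl(\prod_{i=0}^{n-1} x_i^{-1}\Bigr)\prod_{i=0}^{n-1}(1-x_i),
\]
while \eqref{equ:free.ab} and the same identity applied to each factor $\zeta_p(s-i)$ yield $\zeta_{\Z_p^{2n}}(s)|_{p\to p^{-1}} = p^{\binom{2n}{2}-2ns}\,\zeta_{\Z_p^{2n}}(s)$.

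The bulk of the work is to show that the numerator
\[
N(p,s) \;=\; \sum_{w \in S_n} p^{-\len(w) + 2f\lfloor \len^{[n-2]}(w)/f\rfloor s}\prod_{j\in\Des(w)} x_j
\]
satisfies an appropriate self-symmetry.  The plan is to invoke the involution $w \mapsto w_0 w$ on $S_n$ and use the three identities established in Section~\ref{subsec:cox}: $\len(w_0 w) = \binom{n}{2}-\len(w)$, $\Des(w_0 w) = [n-1]\setminus \Des(w)$ (by \eqref{equ:des.w0}), and $\len^{[n-2]}(w_0 w) = (n-1) - \len^{[n-2]}(w)$ (by \eqref{equ:len.par}, together with $\len^{[n-2]}(w_0)=n-1$).

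The key new observation, and the main (albeit mild) obstacle, concerns the floor function $\lfloor \len^{[n-2]}(w)/f\rfloor$.  Writing $\len^{[n-2]}(w) = qf + r$ with $0\le r\le f-1$, the relation $n=ef$ forces $\len^{[n-2]}(w_0 w) = (e-q-1)f + (f-1-r)$, so that
\[
\Bigl\lfloor \frac{\len^{[n-2]}(w)}{f}\Bigr\rfloor + \Bigl\lfloor \frac{\len^{[n-2]}(w_0 w)}{f}\Bigr\rfloor = e-1.
\]
This is exactly what is needed to extract an overall factor $p^{-2(e-1)fs}$ when substituting $w \mapsto w_0 w$ under $p \mapsto p^{-1}$, and it accounts for the deviation $2(e-1)f$ in the exponent of $Y$.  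Combined with the other two Coxeter identities, the substitution $w \mapsto w_0 w$ inside $N(p^{-1},s)$ yields
\[
N(p^{-1},s) \;=\; p^{\binom{n}{2}-2(e-1)fs}\Bigl(\prod_{j=1}^{n-1} x_j^{-1}\Bigr) N(p,s).
\]

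Finally, assembling the three transformed pieces, the accumulated scalar factor is $p^{\binom{2n}{2}+\binom{n}{2}-(2n+2(e-1)f)s}\cdot(-1)^n\cdot x_0$, since the quotient $\bigl(\prod_{j=1}^{n-1}x_j^{-1}\bigr)\big/\bigl(\prod_{i=0}^{n-1}x_i^{-1}\bigr)$ equals $x_0$.  A short arithmetic verification, using $x_0 = p^{2n^2-3ns}$, gives $\binom{2n}{2}+\binom{n}{2}+2n^2 = \binom{3n}{2}$ and $(-1)^n = (-1)^{3n}$, which completes the functional equation.  Only these routine exponent identities remain to be checked once the Coxeter-theoretic computation is in place.
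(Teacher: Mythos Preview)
Your proof is correct and follows essentially the same approach as the paper's own proof: both apply $p\mapsto p^{-1}$ to the explicit formula of Theorem~\ref{thm:main}, use the involution $w\mapsto w_0w$ together with \eqref{equ:len.par} and \eqref{equ:des.w0}, and rely on the same key observation that $\lfloor \len^{[n-2]}(w)/f\rfloor + \lfloor \len^{[n-2]}(w_0w)/f\rfloor = e-1$ (the paper phrases this as the fractional part of $(n-1)/f$ being maximal). Your bookkeeping of the three pieces and the final exponent check are accurate.
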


\begin{proof}
  Recall from~\eqref{equ:len.par} that, for all $w \in S_n$, we have
  $\len^{[n-2]}(w_0w) = \len^{[n-2]}(w_0) - \len^{[n-2]}(w)$, where
  $w_0 \in S_n$ is longest element.  The key observation is that the
  fractional part of $\frac{\len^{[n-2]}(w_0)}{f} = \frac{n-1}{f} = e
  - \frac{1}{f}$ is the largest possible.  Hence, for all $w\in S_n$,
\begin{equation*}
\left\lfloor \frac{\len^{[n-2]}(w_0w)}{f} \right\rfloor = \left\lfloor
\frac{\len^{[n-2]}(w_0)}{f}\right\rfloor -
\left\lfloor\frac{\len^{[n-2]}(w)}{f}\right\rfloor = (e-1)
-\left\lfloor\frac{\len^{[n-2]}(w)}{f}\right\rfloor.
\end{equation*}
Using \eqref{equ:free.ab},
\eqref{equ:des.w0}, and \eqref{equ:des} it then follows that
\begin{align*}
  \lefteqn{\zideal_{H(R)}(s)\vert_{p \rarr p^{-1}}=\zeta_{\Zp^{2n}}(s)
    \vert_{p \rarr p^{-1}} \frac{\sum_{w\in S_n}p^{\len(w) -
        2f\left\lfloor\frac{\len^{[n-2]}(w)}{f}\right\rfloor s}
      \prod_{j\in \Des(w)}
      x_j^{-1}}{\prod_{i=0}^{n-1}(1-x_i^{-1})}}\\&=(-1)^{3n}p^{\binom{2n}{2}-2ns}\zeta_{\Zp^{2n}}(s)
  x_0\frac{\sum_{w\in S_n}p^{\len(w) -
      2f\left\lfloor\frac{\len^{[n-2]}(w)}{f}\right\rfloor s}
    \prod_{j\in [n-1]\setminus
      \Des(w)}x_j}{\prod_{i=0}^{n-1}(1-x_i)}\\&=(-1)^{3n}p^{\binom{3n}{2}-(5n+2(e-1)f)s}\zeta_{\Zp^{2n}}(s)
  \frac{\sum_{w_0w\in S_n}p^{-\len(w_0w) +
      2f\left\lfloor\frac{\len^{[n-2]}(w_0w)}{f}\right\rfloor s}
    \prod_{j\in \Des(w_0w)}x_j}{\prod_{i=0}^{n-1}(1-x_i)}
  \\&=(-1)^{3n} p^{\binom{3n}{2} - (5n + 2(e-1)f)s}
  \zeta_{H(R)}^{\triangleleft}(s),
\end{align*}
as claimed.
\end{proof}

\subsection{An alternative formulation of the main result}
We now prove an alternative formula for $\zideal_{H(R)}(s)$ to that of
Theorem~\ref{thm:main} by showing that, in general, the fraction on
the right hand side of~\eqref{equ:main} admits some cancellation.

Consider the $n$-cycle $c = (1 \mbox{ } 2 \cdots n)\in S_n$. For
$i\in[n-1]_0$, let $x_i$ be as in Theorem~\ref{thm:main}.
\begin{lemma}\label{lem:shift}
  Let $w \in S_n$, and let $m \in [n-1]_0$ be such that $w(1)
  \leq n - m$.  Then
\begin{equation*} 
  p^{- \len (c^m w) + 2 \len^{[n-2]} (c^m w) s} \prod_{j \in \mathrm{Des}(c^m w)} x_j  = (p^{2n - 3s})^m p^{- \len (w) + 2 \len^{[n-2]} (w) s} \prod_{j \in \mathrm{Des}(w)} x_j.
\end{equation*}
\end{lemma}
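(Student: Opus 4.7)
The plan is to prove this by induction on $m$, reducing everything to the base case $m = 1$. Set $E(u) := p^{-\len(u) + 2 \len^{[n-2]}(u)\, s} \prod_{j \in \Des(u)} x_j$ for $u \in S_n$, so that the lemma asserts $E(c^m w) = (p^{2n-3s})^m\, E(w)$ whenever $w(1) \leq n - m$. For the inductive step, given the lemma for $m - 1$ and any $w$ with $w(1) \leq n - m$, set $w^\prime = cw$. Since $w(1) < n$, one has $w^\prime(1) = w(1) + 1 \leq n - (m - 1)$, and the inductive hypothesis applied to $w^\prime$ gives $E(c^{m-1} w^\prime) = (p^{2n-3s})^{m-1} E(w^\prime)$. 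As $c^{m-1} w^\prime = c^m w$, the claim for $(m, w)$ follows once the base case is established, namely $E(cw) = p^{2n-3s}\, E(w)$.

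For the base case, suppose $w(1) \leq n - 1$, equivalently the position $k := w^{-1}(n)$ satisfies $k \in [2, n]$. Since $c$ acts on $[n]$ by $c(j) = j + 1$ for $j < n$ and $c(n) = 1$, one has $(cw)(i) = w(i) + 1$ for $i \neq k$ and $(cw)(k) = 1$. I would then analyse three pieces of data. First, only inversions involving position $k$ are affected: in $w$, position $k$ contributes $n - k$ inversions (as $w(k) = n$ is maximal), whereas in $cw$ it contributes $k - 1$ (as $(cw)(k) = 1$ is minimal), giving $\len(cw) - \len(w) = 2k - n - 1$ uniformly for $k \in [2, n]$. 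Second, a direct check at the positions $k - 1$ and $k$ yields $\Des(cw) = (\Des(w) \setminus \{k\}) \cup \{k - 1\}$, where the set $\{k\}$ is omitted if $k = n$ since descents lie in $[n-1]$. Third, since $\len^{[n-2]}(u) = n - u(n)$, one has $\len^{[n-2]}(cw) - \len^{[n-2]}(w) = -1$ if $k < n$ and $n - 1$ if $k = n$.

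Substituting these data together with the explicit formula $x_i = p^{(2n + i)(n - i) - (3n - i)s}$ into $E(cw)/E(w)$, a short direct calculation in each subcase shows that the total exponent simplifies to $2n - 3s$. For $k < n$, the $k$-dependence cancels between the contribution $n + 1 - 2k$ from $-(\len(cw) - \len(w))$ and the exponent $n + 2k - 1$ of $p$ in $x_{k-1}/x_k$; the $s$-exponent $-2 - 1 = -3$ arises similarly. For $k = n$, there is no $x_k$ factor in the denominator, but the jump $\len^{[n-2]}(cw) - \len^{[n-2]}(w) = n - 1$ together with $x_{n-1}$ recovers the same totals $2n$ and $-3$. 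This establishes the base case and closes the induction.

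The main obstacle will be the non-uniform behaviour of the data at the boundary $k = n$: the abrupt jump of $\len^{[n-2]}$ from $0$ to $n - 1$ prevents a single symbolic manipulation and forces the case split. However, this asymmetry is compensated precisely by the disappearance of $x_k$ from the descent-ratio, so that both subcases land on the common factor $p^{2n - 3s}$; verifying this matching compensation is the essential content of the base-case calculation.
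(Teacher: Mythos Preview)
Your proof is correct and follows essentially the same approach as the paper's: both reduce to the base case $m=1$ by iteration, set $k=w^{-1}(n)$, compute the same three data (change in $\len$, in $\Des$, and in $\len^{[n-2]}$), and verify that the ratio equals $p^{2n-3s}$. The only cosmetic difference is that the paper unifies the two subcases $k<n$ and $k=n$ by the convention $x_n:=1$, whereas you treat them separately.
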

\begin{proof}
  It suffices to prove the statement for $m = 1$; the general case
  clearly follows from iterated application of this result.  So let $w
  \in S_n$ and set $j = w^{-1}(n)$. Recall that $\len^{[n-2]}(w) = n -
  w(n)$ and observe that $\len(c w) - \len (w) = 2j - n - 1$.
  Moreover, we observe that $ \mathrm{Des}(c w) =
  \left(\mathrm{Des}(w) \cup \{ j - 1 \}\right) \setminus \{ j \}$.

  If $j<n$, then $w(n) < n$ and so $w(n) - cw(n) = -1$. If $j = n$,
  then $w(n) - cw(n) = n - 1$. In either case we obtain, by setting $x_n:= 1$, for $j\in[n]$, that
\begin{equation*}
  \frac{ p^{- \len (cw) + 2 \len^{[n-2]} (cw) s} \prod_{j \in
      \mathrm{Des}(cw)} x_j}{p^{- \len (w) + 2 \len^{[n-2]} (w) s}
    \prod_{j \in \mathrm{Des}(w)} x_j} =
  p^{n - 2j + 1 + 2(w(n) - cw(n))s} \frac{x_{j-1}}{x_j} = p^{2n - 3s}.
\end{equation*}

\end{proof}

\begin{lemma}\label{lem:aux}
  Suppose that $w \in S_n$ and $f\in\N$ satisfies $w(1) \leq f$.  Then
  for any $m \leq \lfloor \frac{n-f}{f} \rfloor$ the following holds:
\begin{multline*}
p^{- \len (c^{mf} w) + 2 f\left\lfloor\frac{\len^{[n-2]}(c^{mf}
    w)}{f}\right\rfloor s} \prod_{j \in \mathrm{Des}(c^{mf} w)} x_j
=\\ (p^{2n - 3s})^{mf} p^{- \len (w) + 2
  f\left\lfloor\frac{\len^{[n-2]}(w)}{f}\right\rfloor s} \prod_{j \in
  \mathrm{Des}(w)} x_j.
\end{multline*}
\end{lemma}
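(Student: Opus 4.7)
The plan is to reduce the claim to Lemma~\ref{lem:shift} applied with total cyclic shift equal to $mf$, and then to reconcile the two ways of encoding the exponent of $s$: in Lemma~\ref{lem:shift} it appears as $2\len^{[n-2]}(\cdot)$, whereas here it is $2f\lfloor \len^{[n-2]}(\cdot)/f\rfloor$. The discrepancy will be controlled via a simple congruence modulo~$f$, which is essentially free because $f \mid n$.

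First I would verify that the hypothesis of Lemma~\ref{lem:shift} is satisfied for the substitution $m \mapsto mf$. Using $n = ef$, the bound $m \leq \lfloor (n-f)/f \rfloor$ is precisely $m \leq e-1$, so $mf \leq n - f$; combined with $w(1) \leq f$ this yields $w(1) \leq n - mf$, which is exactly the hypothesis of Lemma~\ref{lem:shift} with parameter $mf$. Applying that lemma will then produce
\begin{equation*}
p^{-\len(c^{mf}w) + 2\len^{[n-2]}(c^{mf}w)s} \prod_{j \in \Des(c^{mf}w)} x_j = (p^{2n-3s})^{mf}\, p^{-\len(w) + 2\len^{[n-2]}(w)s} \prod_{j \in \Des(w)} x_j.
\end{equation*}

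The main remaining step is to establish the congruence
\begin{equation*}
\len^{[n-2]}(c^{mf}w) \equiv \len^{[n-2]}(w) \pmod{f}.
\end{equation*}
For this I would use $\len^{[n-2]}(u) = n - u(n)$ from Section~\ref{subsec:cox}, together with the explicit description $(c^{mf}w)(n) = w(n) + mf$ or $(c^{mf}w)(n) = w(n) + mf - n$, according as $w(n) + mf \leq n$ or not. In either case, the divisibilities $f \mid n$ (since $n = ef$) and $f \mid mf$ force $(c^{mf}w)(n) \equiv w(n) \pmod{f}$, whence the claim. Equivalently, the $f$-residues match:
\begin{equation*}
\len^{[n-2]}(c^{mf}w) - f\left\lfloor \frac{\len^{[n-2]}(c^{mf}w)}{f} \right\rfloor = \len^{[n-2]}(w) - f\left\lfloor \frac{\len^{[n-2]}(w)}{f} \right\rfloor.
\end{equation*}

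Finally, subtracting $2s$ times this common remainder from the exponent of $p$ on each side of the identity extracted from Lemma~\ref{lem:shift} converts $2\len^{[n-2]}$ into $2f\lfloor \len^{[n-2]}/f\rfloor$ on both sides and yields the claimed equality. The only genuinely delicate point, and arguably the main (though rather mild) obstacle, is the bookkeeping to confirm that the bound $m \leq \lfloor (n-f)/f\rfloor$ is exactly what allows the $mf$-fold iteration of Lemma~\ref{lem:shift} to go through; once this is in place, everything else is straightforward.
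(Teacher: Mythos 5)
Your proof is correct and follows essentially the same route as the paper: apply Lemma~\ref{lem:shift} with shift $mf$ (the bound $m \leq \lfloor (n-f)/f\rfloor = e-1$ guarantees $w(1)\leq f\leq n-mf$, i.e.\ its hypothesis) and then observe that $\len^{[n-2]}$ is unchanged modulo $f$, so the floor terms on both sides differ from $\len^{[n-2]}$ by the same remainder. If anything, your handling of the congruence is more careful than the paper's, which asserts the exact identity $\len^{[n-2]}(c^{mf}w)=\len^{[n-2]}(w)-mf$; that identity fails when $w(n)+mf>n$ (e.g.\ for $w=\mathrm{id}$), whereas only the congruence modulo $f$ -- which you prove directly and which always holds because $f\mid n$ -- is actually needed.
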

\begin{proof}
  Let $w$ and $m$ be as in the lemma. Then $\len^{[n-2]}(c^{mf}w) =
  \len^{[n-2]}(w) - mf$.  Thus 
$$ f\left\lfloor\frac{\len^{[n-2]}(c^{mf} w)}{f} \right\rfloor -
  \len^{[n-2]}(c^{mf} w) = f\left\lfloor\frac{\len^{[n-2]}(w)}{f}
  \right\rfloor - \len^{[n-2]}(w) .
$$
The lemma follows immediately from this and Lemma~\ref{lem:shift}.
\end{proof}

\begin{thm}\label{thm:Snf} 
Let $R$ be a finite extension of $\Zp$ with inertia degree $f$ and
ramification index~$e$. Set $n=ef$ and $S_n^{(f)} = \left\{ w \in S_n
\mid w(1) \leq f \right\}$.  Then
\begin{equation*}
  \zeta_{H(R)}^{\triangleleft}(s) =
  \zeta_{\Z_p^{2n}}(s) \frac{\sum_{w \in S^{(f)}_n} p^{- \len
      (w) + 2 f\left\lfloor\frac{\len^{[n-2]}(w)}{f}\right\rfloor s} \prod_{j \in
      \mathrm{Des}(w)} x_j }{(1 - p^{f(2n - 3s)}) \prod_{i = 1}^{n-1} (1 - x_i)}.
\end{equation*}
\end{thm}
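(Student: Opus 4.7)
The plan is to collect the sum over $S_n$ in Theorem~\ref{thm:main} into orbits under left multiplication by $c^f$, where $c = (1\,2\,\cdots\,n)$ is the $n$-cycle appearing in Lemma~\ref{lem:shift}, and then use Lemma~\ref{lem:aux} to extract a geometric series. The key combinatorial step is to establish a bijection
\begin{equation*}
S_n^{(f)} \times [e-1]_0 \;\longleftrightarrow\; S_n, \qquad (w',m) \longmapsto c^{mf}w'.
\end{equation*}
Given $w \in S_n$, one writes $w(1) = mf + r$ uniquely with $m \in [e-1]_0$ and $r \in [f]$, and the inverse map is $w \mapsto (c^{-mf}w,\, m)$. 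Conversely, for $w' \in S_n^{(f)}$ and $m \in [e-1]_0$ one has $w'(1) + mf \leq f + (e-1)f = n$, so $(c^{mf}w')(1) = w'(1) + mf$, from which $m$ and $w'$ are readily recovered.

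With the bijection in hand, Lemma~\ref{lem:aux} applies uniformly for every $m \in [e-1]_0$, since $e-1 = \lfloor(n-f)/f\rfloor$, and yields
\begin{equation*}
p^{-\len(c^{mf}w') + 2f\left\lfloor\frac{\len^{[n-2]}(c^{mf}w')}{f}\right\rfloor s} \prod_{j \in \Des(c^{mf}w')} x_j = \bigl(p^{f(2n-3s)}\bigr)^m \cdot p^{-\len(w') + 2f\left\lfloor\frac{\len^{[n-2]}(w')}{f}\right\rfloor s} \prod_{j \in \Des(w')} x_j.
\end{equation*}
Summing over $(w',m) \in S_n^{(f)} \times [e-1]_0$ and applying the geometric series formula $\sum_{m=0}^{e-1}(p^{f(2n-3s)})^m = \frac{1-p^{ef(2n-3s)}}{1-p^{f(2n-3s)}}$ rewrites the numerator of Theorem~\ref{thm:main} as
\begin{equation*}
\frac{1-p^{ef(2n-3s)}}{1-p^{f(2n-3s)}} \sum_{w' \in S_n^{(f)}} p^{-\len(w') + 2f\left\lfloor\frac{\len^{[n-2]}(w')}{f}\right\rfloor s} \prod_{j \in \Des(w')} x_j.
\end{equation*}

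Finally, I would use the identity $p^{ef(2n-3s)} = p^{n(2n-3s)} = p^{2n^2 - 3ns} = x_0$, so that the numerator factor $1-p^{ef(2n-3s)}$ coincides with the factor $(1-x_0)$ appearing in the denominator $\prod_{i=0}^{n-1}(1-x_i)$ of Theorem~\ref{thm:main}; these cancel to leave exactly the formula of Theorem~\ref{thm:Snf}. The main step is the combinatorial bijection and the verification that the hypothesis of Lemma~\ref{lem:aux} is met uniformly across the range of $m$; both are straightforward consequences of the equality $n = ef$, so no substantive obstacle is anticipated.
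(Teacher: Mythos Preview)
Your proposal is correct and follows essentially the same route as the paper's proof: the paper also derives the result from Theorem~\ref{thm:main} and Lemma~\ref{lem:aux} via the observation that every element of $S_n$ is uniquely of the form $c^{mf}w$ with $m\in[e-1]_0$ and $w\in S_n^{(f)}$, together with the factorization $1-x_0=(1-p^{f(2n-3s)})\sum_{m=0}^{e-1}p^{mf(2n-3s)}$. Your write-up simply makes the bijection and the verification of the hypothesis of Lemma~\ref{lem:aux} more explicit than the paper does.
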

\begin{proof}
  This follows from Theorem~\ref{thm:main}, Lemma~\ref{lem:aux}, and
  the observations that
$$ 1 - x_0 = 1 - (p^{2n - 3s})^n = (1 - (p^{f(2n - 3s)})) \sum_{m =
  0}^{e - 1} p^{mf(2n - 3s)} $$ and that every element of $S_n$ can be
written uniquely in the form $c^{mf} w$, where $m \in [e-1]_0$
and $w \in S^{(f)}_n$.
\end{proof}

\begin{rem}
An interesting question is whether the fraction in
Theorem~\ref{thm:Snf} is always in lowest terms and admits no more
cancellation.  T.~Bauer has verified this for all pairs $(e,f)$
with~$n = ef \leq 10$.
\end{rem}

In the case that $e=1$, Theorem~\ref{thm:Snf} is exactly
Theorem~\ref{thm:inert}. In the other extreme, the case $f=1$, we
obtain an interesting corollary.  We identify $S_n^{(1)}$, the
stabilizer in $S_n$ of the letter~$1$, viz.\ the parabolic subgroup
$(S_n)_{\{s_i\mid\, 2 \leq i \leq n-1\}}$, with~$S_{n-1}$.

\begin{cor}\label{cor:tot.ram}
  Let $R$ be a totally ramified extension of~$\Zp$ of degree~$n$. Then
\begin{equation}\label{equ:tot.ram.cancel}
  \zeta_{H(R)}^{\triangleleft}(s) = \zeta_{\Z_p^{2n}}(s)
  \frac{\sum_{w \in S_{n-1}} p^{- \len(w) + 2 \len^{[n-3]}(w) s}
    \prod_{j \in \mathrm{Des}(w)} x_{j+1}}{(1 - p^{2n - 3s}) \prod_{i
      = 1}^{n-1} (1 - x_i)},
\end{equation}
with numerical data $x_i = p^{(2n+i)(n-i) - (3n-i)s}$ for $i\in[n-1]$.
\end{cor}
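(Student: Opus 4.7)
The plan is to derive Corollary~\ref{cor:tot.ram} directly from Theorem~\ref{thm:Snf} by specializing to $f = 1$ (so that $e = n$) and then translating the statistics on $S_n^{(1)}$ to the corresponding statistics on $S_{n-1}$ under the natural identification $S_n^{(1)} \cong S_{n-1}$.

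First, I would set $f = 1$ in Theorem~\ref{thm:Snf}. Since $\lfloor \len^{[n-2]}(w)/1 \rfloor = \len^{[n-2]}(w)$, the formula becomes
\begin{equation*}
  \zeta_{H(R)}^{\triangleleft}(s) = \zeta_{\Z_p^{2n}}(s) \frac{\sum_{w \in S^{(1)}_n} p^{- \len(w) + 2 \len^{[n-2]}(w) s} \prod_{j \in \mathrm{Des}(w)} x_j}{(1 - p^{2n - 3s}) \prod_{i = 1}^{n-1} (1 - x_i)}.
\end{equation*}
The denominator already matches the one in~\eqref{equ:tot.ram.cancel}, so all that remains is to rewrite the numerator as a sum indexed by $S_{n-1}$.

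Next, I would identify $S_n^{(1)}$, the stabilizer of $1$, with the parabolic subgroup $W_{[2,n-1]} = \langle s_2, \dots, s_{n-1}\rangle \leq S_n$, and in turn with $S_{n-1}$ via the Coxeter isomorphism sending the generator $s_i$ of $S_{n-1}$ (for $i \in [n-2]$) to $s_{i+1}$ in $S_n$. Concretely, $w \in S_{n-1}$ corresponds to the element $\tilde w \in S_n^{(1)}$ defined by $\tilde w(1) = 1$ and $\tilde w(j) = w(j-1)+1$ for $j \in [2,n]$. Under this identification, I would verify the following translations of the three relevant statistics:
\begin{itemize}
\item Since the isomorphism of Coxeter systems preserves length, $\len_{S_n}(\tilde w) = \len_{S_{n-1}}(w)$.
\item From $\tilde w(n) = w(n-1)+1$ we get $\len^{[n-2]}(\tilde w) = n - \tilde w(n) = (n-1) - w(n-1) = \len^{[n-3]}(w)$, where the last length function is the parabolic length function on $S_{n-1}$ associated with $[n-3]\subseteq [n-2]$.
\item Because $\tilde w(1)=1 < \tilde w(2)$, the descent $1 \notin \Des(\tilde w)$; for $i\geq 2$, $\tilde w(i+1) < \tilde w(i)$ if and only if $w(i) < w(i-1)$, so $\Des(\tilde w) = \{j+1 \mid j \in \Des(w)\}$, giving $\prod_{j \in \Des(\tilde w)} x_j = \prod_{j\in \Des(w)} x_{j+1}$.
\end{itemize}

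Finally, substituting these three identities into the sum over $S_n^{(1)}$ reindexes it as a sum over $S_{n-1}$ and yields
\begin{equation*}
  \sum_{w \in S_n^{(1)}} p^{-\len(w) + 2\len^{[n-2]}(w) s}\prod_{j \in \Des(w)} x_j \;=\; \sum_{w \in S_{n-1}} p^{-\len(w) + 2\len^{[n-3]}(w) s}\prod_{j \in \Des(w)} x_{j+1},
\end{equation*}
which is exactly the numerator in \eqref{equ:tot.ram.cancel}. Since this reduction is essentially bookkeeping, I do not expect any real obstacle; the only point requiring mild care is confirming that the parabolic length functions on $S_n$ and $S_{n-1}$ match up correctly under the index shift, which is precisely what the computation of $\tilde w(n)$ delivers.
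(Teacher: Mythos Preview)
Your proposal is correct and follows exactly the approach indicated in the paper: specialize Theorem~\ref{thm:Snf} to $f=1$ and identify $S_n^{(1)}$ with $S_{n-1}$ via the index shift $s_i \mapsto s_{i+1}$, translating $\len$, $\len^{[n-2]}$, and $\Des$ accordingly. The paper merely states this identification without spelling out the bookkeeping, so your write-up is in fact more detailed than the original.
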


\begin{exm} 
We illustrate our results in the case $e=3$, $f=1$. Thus let $R$ be a
totally ramified cubic extension of $\Zp$. It is shown in
\cite[Proposition~8.15]{GSS/88} that
\begin{equation}\label{equ:cubic.ram.GSS}
\zideal_{H(R)}(s) = \frac{1 +
  p^{7-5s}}{\left(\prod_{i=0}^{5}(1-p^{i-s})\right)
  (1-p^{6-3s})(1-p^{8-7s})(1-p^{14-8s})}.
\end{equation}
Theorem~\ref{thm:main} presents this zeta function as
  \begin{equation}\label{equ:cubic.ram.SV}
\zeta_{H(R)}^{\triangleleft}(s) = \zeta_{\Z_p^{6}}(s)
\frac{\sum_{w \in S_3} p^{- \len (w) + 2 \len^{\{1\}}(w)s} \prod_{j
    \in \Des(w)} x_{j} }{\prod_{i = 0}^{2} (1 - x_i)},
\end{equation}
with numerical data
\begin{equation*}
x_0 = p^{18-9s}, \quad
x_1 = p^{14-8s},\quad
x_2 = p^{8-7s}.
\end{equation*}
The Coxeter group $S_3$ is generated by the involutions $s_1$ and
$s_2$. We tabulate the values of the functions $\Des$, $\len$, and
$\len^{\{1\}}$ on $S_3$.

\begin{center}
\begin{tabular}{c|ccc}
$w\in S_3$ & $\Des(w)$ & $\len(w)$ & $\len^{\{1\}}(w)$\\ \hline
  $1$ & $\varnothing$ & $0$ & $0$ \\ $s_1$ & $\{1\}$ & $1$ & $0$
  \\ $s_2$ & $\{2\}$ & $1$ & $1$ \\ $s_2s_1$ & $\{1\}$ & $2$ & $1$
  \\ $s_1s_2$ & $\{2\}$ & $2$ & $2$ \\ $(s_2s_1s_2 =) s_1s_2s_1$ &
  $\{1,2\}$ & $3$ & $2$
\end{tabular}
\end{center}
We deduce that 
\begin{multline*}
\sum_{w\in S_3} p^{-\len(w) +
    2\len^{\{1\}}(w)s}\prod_{j\in\Des(w)}x_j\\= 1 + p^{13-8s} +
p^{7-5s}+ p^{12-6s}+ p^{6-3s}+ p^{19-11s}= \frac{(1- p^{18-9s})(1+
  p^{7-5s})}{1-p^{6-3s}},
\end{multline*}
showing that \eqref{equ:cubic.ram.GSS} accords
with~\eqref{equ:cubic.ram.SV}. Formula \eqref{equ:cubic.ram.GSS} also
illustrates~\eqref{equ:tot.ram.cancel}, as $\langle s_2\rangle \cong
S_2$ and $$1 + p^{7-5s} = 1+ p^{-1+2s}x_2 = \sum_{w\in S_2}p^{-\len(w)
  + 2\len(w)s} \prod_{j\in\Des(w)} x_{j+1}.$$
\end{exm}

\bibliographystyle{amsplain}

\def\cprime{$'$} \def\cprime{$'$}
\providecommand{\bysame}{\leavevmode\hbox to3em{\hrulefill}\thinspace}
\providecommand{\MR}{\relax\ifhmode\unskip\space\fi MR }
\providecommand{\MRhref}[2]{%
  \href{http://www.ams.org/mathscinet-getitem?mr=#1}{#2}
}
\providecommand{\href}[2]{#2}

\end{document}